\title{An informal introduction to perturbations of
matrices determined up to similarity or
congruence}
\date{}
\author{Lena Klimenko\thanks{National
Technical University of Ukraine ``Kyiv
Polytechnic Institute'', Prospect
Peremogy 37, Kiev, Ukraine.
Email: \mbox{e.n.klimenko@gmail.com}
}
    \and
Vladimir V. Sergeichuk\thanks{Institute
of Mathematics, Tereshchenkivska 3,
Kiev, Ukraine.
Supported by the FAPESP grant
2012/18139-2.
The work was done while this author
was visiting the University of S\~ao
Paulo, whose hospitality is gratefully
acknowledged. Email:
\mbox{sergeich@imath.kiev.ua}}}
\DeclareMathOperator{\im}{Im}
\DeclareMathOperator{\codim}{codim}
\DeclareMathOperator{\dddots}%
{\!\text{\raisebox{2pt}{$\cdot$}}\cdot
\text{\raisebox{-2pt}{$\cdot$}}}
\renewcommand{\ge}{\geqslant}
\newtheorem{theorem}{Theorem}[section]
\newtheorem{lemma}{Lemma}[section]
\newtheorem{corollary}{Corollary}[section]
\theoremstyle{definition}
\newtheorem{definition}{Definition}[section]
\theoremstyle{remark}
\newtheorem{remark}{Remark}[section]
\newtheorem{example}{Example}[section]
\begin{document}

\maketitle
\begin{abstract}
The reductions of a square complex
matrix $A$ to its canonical forms under
transformations of similarity,
congruence, or *congruence are unstable
operations: these canonical forms and
reduction transformations depend
discontinuously on the entries of $A$.
We survey results about their behavior
under perturbations of $A$ and about
normal forms of all matrices $A+E$ in a
neighborhood of $A$ with respect to
similarity, congruence, or *congruence.
These normal forms are called
miniversal deformations of $A$; they
are not uniquely determined by $A+E$,
but they are simple and depend
continuously on the entries of $E$.

{\it AMS classification:} 15A21, 15A63,
47A07, 47A55.

{\it Keywords:} similarity, congruence,
*congruence, perturbations, miniversal
deformations, closure graphs.
\end{abstract}

\section{Introduction}
\label{introd}

The purpose of this survey is to give
an informal introduction into the
theory of perturbations of a square
complex matrix $A$ determined up to
transformations of similarity
$S^{-1}AS$, or congruence $S^TAS$, or
*congruence $S^*AS$, in which $S$ is
nonsingular and $S^*:=\bar S^T$.

The reduction of a matrix to its Jordan
form is an unstable operation: both the
Jordan form and a reduction
transformation depend discontinuously
on the entries of the original matrix.
For example, the Jordan matrix
$J_2(\lambda)\oplus J_2(\lambda)$ (we
denote by $J_n(\lambda )$ the $n\times
n$ upper-triangular Jordan block with
eigenvalue $\lambda $) is reduced by
arbitrarily small perturbations to
matrices
\begin{equation}\label{lis}
\left[\begin{array}{cc|cc}
\lambda&1&&\\&\lambda&&\varepsilon
\\\hline&&\lambda&1
\\ &&&\lambda
\end{array}\right]\quad\text{or}\quad
\left[\begin{array}{cc|cc}
\lambda&1&&\\&\lambda&\varepsilon &
\\\hline&&\lambda&1
\\ &&&\lambda
\end{array}\right],\qquad \varepsilon\ne
0,
\end{equation}
whose Jordan canonical forms are
$J_3(\lambda)\oplus J_1(\lambda)$ or
$J_4(\lambda)$, respectively.
Therefore, if the entries of a matrix
are known only approximately, then it
is unwise to reduce it to its Jordan
form.

Furthermore, when investigating a
family of matrices close to a given
matrix, then although each individual
matrix can be reduced to its Jordan
form, it is unwise to do so since in
such an operation the smoothness
relative to the entries is lost.

Let $J$ be a Jordan matrix.
\begin{itemize}
  \item[(a)] Arnold \cite{arn} (see
      also \cite{arn2,arn3})
      constructed a
      \emph{miniversal deformation}
      of $J$; i.e., a simple normal
      form to which all matrices
      $J+E$ close to $J$ can be
      reduced by similarity
      transformations that smoothly
      depend on the entries of $E$.

  \item[(b)] Boer and Thijsse
      \cite{den-thi} and,
      independently, Markus and
      Parilis \cite{mar-par} found
      each Jordan matrix $J'$ for
      which there exists an
      arbitrary small matrix $E$
      such that $J+E$ is similar to
      $J'$. For example, if
      $J=J_2(\lambda)\oplus
      J_2(\lambda)$, then $J'$ is
      either $J$, or
      $J_3(\lambda)\oplus
      J_1(\lambda)$, or
      $J_4(\lambda)$ with the same
      $\lambda $ (see \eqref{lis}).

      \end{itemize}

Using (b), it is easy to construct for
small $n$ the \emph{closure graph
$G_n$} for similarity classes of
$n\times n$ complex matrices; i.e., the
Hasse diagram of the partially ordered
set of similarity classes of $n\times
n$ matrices that are ordered as
follows: $a\preccurlyeq b$ if $a$ is
contained in the closure of $b$. Thus,
the graph $G_n$ shows how the
similarity classes relate to each other
in the affine space of $n\times n$
matrices.

In Section \ref{sse} we give a sketch
of constructive proof of Arnold's
theorem about miniversal deformations
of Jordan matrices, and in Sections
\ref{kkt}--\ref{jjs} we consider
closure graphs for similarity classes
and similarity bundles. In Sections
\ref{ssss} and \ref{mmj} we survey
analogous results about perturbations
of matrices determined up to congruence
or *congruence.

We do not survey the well-developed
theory of perturbations of matrix
pencils
\cite{kag,kag2,e-j-k,gar_ser,joh,k-s_triang};
i.e., of matrix pairs $(A,B)$ up to
equivalence transformations $(RAS,RBS)$
with nonsingular $R$ and $S$.

All matrices that we consider are
complex matrices.

\section{Perturbations of matrices
determined up to similarity}

\subsection{Arnold's miniversal
deformations of matrices under
similarity}\label{sse}

In this section, we formulate Arnold's
theorem about miniversal deformations
of matrices under similarity and give a
sketch of its constructive proof. Since
each square matrix is similar to a
Jordan matrix, it suffices to study
perturbations of Jordan matrices.

For each Jordan  matrix
\begin{equation}\label{kje}
J=\bigoplus_{i=1}^t(J_{m_{i1}}(\lambda _i)
\oplus\dots\oplus J_{m_{ir_i}}(\lambda _i)),
\qquad m_{i1}\ge m_{i2}\ge
\dots\ge m_{ir_i}
\end{equation}
with $\lambda _i\ne\lambda _j$ if $i\ne
j$, we define the matrix of the same
size
\begin{equation}\label{hju}
J+{\cal D}:=\bigoplus_{i=1}^t
\begin{bmatrix}
  J_{m_{i1}}(\lambda _i)+0^{\downarrow} &
  0^{\downarrow} & \dots & 0^{\downarrow}
  \\[7pt]
  0^{\leftarrow} & J_{m_{i2}}(\lambda _i)+0^{\downarrow}
   & \dddots & \vdots
  \\[7pt]
  \vdots & \dddots & \dddots & 0^{\downarrow}
  \\[7pt]
  0^{\leftarrow} & \dots & 0^{\leftarrow}
  & J_{m_{ir_i}}(\lambda _i)+
  0^{\downarrow}
\end{bmatrix}
\end{equation}
in which
\begin{equation*}\label{fde}
  0^{\leftarrow}:=
  \begin{bmatrix}
   *&0&\dots&0 \\
  \vdots&\vdots&&\vdots \\
  *&0&\dots&0 \\
  \end{bmatrix}\quad\text{and}
  \quad
   0^{\downarrow}:=
  \begin{bmatrix}
   0&\cdots&0 \\
  \vdots&&\vdots \\
  0&\cdots&0 \\
*&\cdots&* \\
  \end{bmatrix}
\end{equation*}
are blocks whose entries are zeros and
stars.

The following theorem of Arnold
\cite[Theorem 4.4]{arn} is also given
in \cite[Section 3.3]{arn2} and
\cite[\S\,30]{arn3}.

\begin{theorem}[{\cite{arn}}]\label{teo2}
Let $J$ be the Jordan matrix
\eqref{kje}. Then all matrices $J+X$
that are sufficiently close to $J$ can
be simultaneously reduced by some
transformation
\begin{equation}\label{tef}
J+X\mapsto {\cal
S}(X)^{-1}(J+X) {\cal
S}(X),\qquad\begin{matrix}
\text{${\cal S}(X)$
is analytic}\\
\text{at $0$ and } {\cal
S}(0)=I,
\end{matrix}
\end{equation}
to the form $J +{\cal D}$ defined in
\eqref{hju} whose stars are replaced by
complex numbers that depend
analytically on the entries of $X$. The
number of stars is minimal that can be
achieved by transformations of the form
\eqref{tef}, it is equal to the
codimension of the similarity class of
$J$.
\end{theorem}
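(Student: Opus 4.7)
The plan is to realize miniversal deformations as transversal slices to the similarity orbit and then invoke the inverse function theorem. The action $S\cdot M := S^{-1}MS$ of $\mathrm{GL}_n(\mathbb{C})$ on $\mathrm{Mat}_n(\mathbb{C})$ is analytic; differentiating $S\mapsto S^{-1}JS$ at $S=I$ identifies the tangent space to the orbit of $J$ with $\im(\mathrm{ad}_J)$, where $\mathrm{ad}_J(K) := JK-KJ$. Hence the codimension of the similarity class of $J$ equals $\dim\ker(\mathrm{ad}_J) = \dim C(J)$, the dimension of the centralizer of $J$.

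Let $V\subset\mathrm{Mat}_n(\mathbb{C})$ denote the linear subspace spanned by the star positions of \eqref{hju}, so that $J+\mathcal{D}$ ranges over $J+V$ as the stars vary. Consider the analytic map
\begin{equation*}
\Phi:\mathrm{GL}_n(\mathbb{C})\times V\longrightarrow\mathrm{Mat}_n(\mathbb{C}),\qquad\Phi(S,D):=S^{-1}(J+D)S.
\end{equation*}
Writing $S=I+K$ near $I$, its differential at $(I,0)$ sends $(K,D)$ to $\mathrm{ad}_J(K)+D$.

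The heart of the argument is the algebraic identity
\begin{equation*}
\mathrm{Mat}_n(\mathbb{C}) \;=\; \im(\mathrm{ad}_J)\,\oplus\, V. \qquad(\ast)
\end{equation*}
Given $(\ast)$, the differential of $\Phi$ at $(I,0)$ is surjective with kernel $\{(K,0):K\in C(J)\}$. Choosing a complement $W\subset\mathrm{Mat}_n(\mathbb{C})$ to $C(J)$ and restricting $\Phi$ to $\{I+K:K\in W\}\times V$ gives a map with bijective differential at $(I,0)$, and the inverse function theorem supplies analytic maps $\mathcal{S}(X)=I+K(X)$ and $D(X)\in V$ with $K(0)=0$, $D(0)=0$, satisfying $\mathcal{S}(X)^{-1}(J+X)\mathcal{S}(X)=J+D(X)$ for all small $X$. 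This is exactly the desired normal form with analytic star entries. Minimality of the number of stars follows at once from $(\ast)$: $\dim V = \codim\,\im(\mathrm{ad}_J) = \dim C(J)$ equals the orbit codimension, and no transversal can be smaller.

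The main obstacle is the linear-algebraic verification of $(\ast)$ for the specific block pattern of \eqref{hju}. Since blocks associated with distinct eigenvalues $\lambda_i\ne\lambda_j$ decouple under $\mathrm{ad}_J$, one reduces to a single eigenvalue $\lambda$, where $J-\lambda I$ is a direct sum of nilpotent Jordan blocks. There the trace form $\langle M,N\rangle := \mathrm{tr}(MN)$ identifies the annihilator of $\im(\mathrm{ad}_J)$ with $C(J^T)$, because $\mathrm{tr}([J,K]N) = \mathrm{tr}(K[N,J])$. Thus $(\ast)$ reduces to nondegeneracy of the pairing $V\times C(J^T)\to\mathbb{C}$, and using the standard Toeplitz-like description of centralizers of nilpotent Jordan forms one checks block by block that the stars in the $0^{\downarrow}$ and $0^{\leftarrow}$ patterns pair bijectively with the leading entries of the centralizer blocks, yielding an explicit invertible pairing matrix.
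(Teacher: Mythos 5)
Your proposal is correct in outline but takes a genuinely different route from the paper. The paper gives a \emph{constructive} sketch: starting from an explicit $5\times 5$ example and then the general nilpotent case, it shows step by step how near-identity elementary similarity transformations (Lemma~\ref{nbf}) push $J+E$ into the form \eqref{hju}, and handles distinct eigenvalues via block-off-diagonal annihilation (solving Sylvester equations $E_{21}+J_2M-MJ_1=0$) and a convergent infinite product $S_1S_2\cdots$. Your approach is the abstract tangent-space/transversality argument closer to Arnold's original paper: identify the orbit tangent space with $\im(\mathrm{ad}_J)$, prove $\mathrm{Mat}_n=\im(\mathrm{ad}_J)\oplus V$ for the star subspace $V$, and apply the holomorphic inverse function theorem. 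Your version gives minimality and analyticity essentially for free and is arguably more rigorous; the paper's version makes the shape of \eqref{hju} transparent and suggests an actual algorithm, at the cost of leaving the convergence and the precise bookkeeping informal. One small slip in your write-up: with the trace form $\langle M,N\rangle=\mathrm{tr}(MN)$, the identity $\mathrm{tr}([J,K]N)=\mathrm{tr}(K[N,J])$ shows the annihilator of $\im(\mathrm{ad}_J)$ is $C(J)$, not $C(J^T)$; to land on $C(J^T)$ (or $C(J^*)$, as in Arnold) you would need the Frobenius form $\mathrm{tr}(MN^T)$. Either choice works for your argument since $\dim C(J)=\dim C(J^T)$ and the pairing of the bottom-row/first-column star positions with the Toeplitz-type centralizer blocks is nondegenerate in both cases; just make the pairing and the centralizer consistent, and the block-by-block verification of $(\ast)$ is the part that still needs to be written out carefully.
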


The matrix \eqref{hju} with independent
parameters instead of stars is called a
\emph{miniversal deformation} of $J$
(see formal definitions in \cite{arn},
\cite{arn2}, or \cite{arn3}).

The codimension of the similarity class
of $J$ is defined as follows. For each
$A\in{\mathbb C}^{n\times n}$ and a
small matrix $X\in{\mathbb C}^{n\times
n}$,
\begin{align*}
(I-X)^{-1}A(I-X)&=(I+X+X^2+\cdots)A(I-X)\\
&= A+(XA-AX)+X(XA-AX)+X^2(XA-AX)+\cdots
\\ &=
A+\underbrace{XA-AX}
_{\text{small}}
+\underbrace{X(I-X)^{-1}(XA-AX)}
_{\text{very small}}
\end{align*}
and so the similarity class of $A$ in a
small neighborhood of $A$ can be
obtained by a very small deformation of
the affine matrix space $\{A+ XA-
AX\,|\,X\in{\mathbb C}^{n\times n}\}$.
(By the Lipschitz property \cite{rodm},
if $A$ and $B$ are close to each other
and $B=S^{-1}AS$ with a nonsingular
$S$, then $S$ can be taken near $I_n$.)

The vector space
\[
T(A):=\{XA-AX\,|\,X\in{\mathbb
C}^{n\times n}\}
\]
is the \emph{tangent space} to the
similarity class of $A$ at the point
$A$. The numbers
\begin{equation}\label{kow}
\dim_{\mathbb C} T(A),\qquad\codim_{\mathbb C} T(A):= n^2-\dim_{\mathbb C} T(A)
\end{equation}
are called the \emph{dimension} and
\emph{codimension} of the similarity
class of $A$.

\begin{remark}
The matrix \eqref{hju} is the direct
sum of $t$ matrices that are not block
triangular. But each Jordan matrix $J$
is permutation similar to some
\emph{Weyr matrix} $J^{\#}$ with the
following remarkable property: all
commuting with $J^{\#}$ matrices are
upper block triangular. Producing with
\eqref{hju} the same transformations of
permutation similarity, Klimenko and
Sergeichuk \cite{k-s_triang} obtained
an upper block triangular matrix
$J^{\#}+{\cal D}^{\#}$, which is a
miniversal deformation of $J^{\#}$.
\end{remark}

Now we show sketchily how all matrices
near $J$ can be reduced to the form
\eqref{hju} by near-identity elementary
similarity transformations; which
explains the structure of the matrix
\eqref{hju}.

\begin{lemma}\label{nbf}
Two matrices are similar if and only if
one can be transformed to the other by
a sequence of the following
transformations $($which are called
\emph{elementary similarity
transformations}; see {\rm\cite[Section
1.40]{wil}}$)$:
\begin{itemize}
  \item[\rm(i)] Multiplying column
      $i$ by a nonzero $a\in\mathbb
      C$; then dividing row $i$ by
      $a$.

  \item[\rm(ii)] Adding column $i$
      multiplied by $b\in\mathbb C$
      to column $j$; then
      subtracting row $j$
      multiplied by $b$ from row
      $i$.

  \item[\rm(iii)] Interchanging
      columns $i$ and $j$; then
      interchanging rows $i$ and
      $j$.
\end{itemize}
\end{lemma}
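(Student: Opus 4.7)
The plan is to establish the lemma as a direct translation of the standard factorization of invertible matrices into elementary matrices. Each of the three listed transformations has the form $A\mapsto E^{-1}AE$ for an elementary matrix $E$: type (i) corresponds to the diagonal elementary matrix $E=\operatorname{diag}(1,\dots,1,a,1,\dots,1)$ (with $a$ in position $i$), type (ii) corresponds to $E=I+b\,e_ie_j^T$, and type (iii) corresponds to the transposition matrix $E=P_{ij}$. In each case the bracketed ``column then row'' operations described in the statement are precisely the left multiplication by $E$ followed by left multiplication by $E^{-1}$, i.e.\ the similarity $E^{-1}AE$.

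The ``if'' direction is then immediate: each elementary similarity preserves similarity, so composing them does too. For the ``only if'' direction, I would take similar matrices $A$ and $B=S^{-1}AS$ with $S\in GL_n(\mathbb C)$ and invoke the standard fact that every invertible complex matrix factors as a product $S=E_1E_2\cdots E_k$ of elementary matrices of the three types above. Substituting gives
\[
B=E_k^{-1}\cdots E_1^{-1}\,A\,E_1\cdots E_k,
\]
and reading this from the inside out exhibits $B$ as the result of applying $k$ successive elementary similarity transformations to $A$.

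The only nontrivial input is the factorization of $S$ into elementary matrices; this is Gaussian elimination applied to $S$, reducing it to the identity by a sequence of column operations of types (i)--(iii). Since $S$ is nonsingular, permutations (type (iii)) suffice to bring a nonzero pivot into position, type (ii) operations clear the remaining entries of the pivot column, and type (i) operations rescale the pivots to $1$. Inverting the resulting relation $E_k'\cdots E_1' S=I$ yields $S=(E_1')^{-1}\cdots (E_k')^{-1}$, and since the inverse of each elementary matrix is an elementary matrix of the same type, the desired factorization follows.

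I expect no real obstacle: the content is essentially that $GL_n(\mathbb C)$ is generated by the three families of elementary matrices, together with the observation that conjugation by a product equals the composition of conjugations. The only point requiring a bit of care is matching the ``column then row'' bookkeeping in the statement with the algebraic form $E^{-1}AE$, which is purely notational.
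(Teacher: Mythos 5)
Your proposal matches the paper's proof exactly: factor $S$ into elementary matrices $E_1\cdots E_t$ and read $B=E_t^{-1}\cdots E_1^{-1}AE_1\cdots E_t$ inside out as a chain of elementary similarity transformations, the paper simply taking the factorization of $S\in GL_n(\mathbb C)$ as known where you spell out the Gaussian-elimination argument. (One small slip worth fixing: the column step is \emph{right} multiplication by $E$, not left --- you clearly intend this, since you then write $E^{-1}AE$; likewise your closing relation $E_k'\cdots E_1'S=I$ encodes row operations even though the sentence before says column operations.)
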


\begin{proof}
 Let $A$ and $B$ be similar; that is,
 $S ^{-1}AS=B$. Write $S$ as a product
 of elementary matrices:
 $S=E_1E_2\cdots E_t$. Then
 \[
A\mapsto E_1^{-1}AE_1\mapsto E_2^{-1}E_1^{-1}AE_1E_2
\mapsto\dots\mapsto E_t^{-1}\cdots E_2^{-1}E_1^{-1}AE_1E_2\cdots
 E_t=B
 \]
 is a desired sequence of elementary similarity
transformations.
\end{proof}

\begin{proof}[Sketch of the proof of
Theorem \ref{teo2}]

Two cases are possible.

\noindent {\it Case 1: $t=1$}. Suppose
first that $J=J_3(0)\oplus J_2(0)$. Let
\begin{equation}\label{nrd}
J+E=[b_{ij}]_{i,j=1}^5:=\left[\begin{array}{ccc|cc}
\varepsilon_{11}&1+\varepsilon_{12}&
\varepsilon_{13}&\varepsilon_{14}&\varepsilon_{15}
    \\
\varepsilon_{21}&\varepsilon_{22}&
1+\varepsilon_{23}&\varepsilon_{24}
&\varepsilon_{25}
    \\
\varepsilon_{31}&\varepsilon_{32}&
\varepsilon_{33}&\varepsilon_{34}
&\varepsilon_{35}
   \\\hline
\varepsilon_{41}&\varepsilon_{42}&
\varepsilon_{43}&\varepsilon_{44}&
1+\varepsilon_{45}
    \\
\varepsilon_{51}&\varepsilon_{52}&
\varepsilon_{53}&\varepsilon_{54}
&\varepsilon_{55}
\end{array}\right]
\end{equation}
be any matrix near $J$ (i.e., all
$\varepsilon _{ij}$ are small). We need
to reduce it to the form
\begin{equation}\label{nrd1}
\left[\begin{array}{ccc|cc}
0&1&0&0&0
    \\
0&0&1&0&0
    \\
*&*&*&*&*
   \\\hline
*&0&0&0&1
    \\
*&0&0&*&*
\end{array}\right],
\end{equation}
in which the $*$'s are small complex
numbers, by those transformations from
Lemma \ref{nbf} that are close to the
identity transformation.

Dividing column 2 of \eqref{nrd} by
$1+\varepsilon_{12}$ and multiplying
row 2 by $1+\varepsilon_{12}$
(transformation (i)), we make
$b_{12}=1$. Since $\varepsilon_{12}$ is
small, this transformation is
near-identity and the obtained matrix
is near $J$. Some $b_{ij}$ and
$\varepsilon _{ij}$ have been changed,
but we use the same notation for them.

Subtracting column 2 (with $\varepsilon
_{12}=0$) multiplied by
$\varepsilon_{11}$ from column 1, we
make $b_{11}=0$; the inverse
transformation of rows (which must be
done by the definition of
transformation (ii)) slightly changes
row 2. Analogously, we make
$b_{13}=b_{14}=b_{15}=0$ subtracting
column 2; the inverse transformations
of rows slightly change row 2.

We obtain
\[
[b_{ij}]_{i,j=1}^5=\left[\begin{array}{ccc|cc}
0&1&0&0&0
    \\
\varepsilon_{21}&\varepsilon_{22}&
1+\varepsilon_{23}&\varepsilon_{24}
&\varepsilon_{25}
    \\
\varepsilon_{31}&\varepsilon_{32}&
\varepsilon_{33}&\varepsilon_{34}
&\varepsilon_{35}
   \\\hline
\varepsilon_{41}&\varepsilon_{42}&
\varepsilon_{43}&\varepsilon_{44}&
1+\varepsilon_{45}
    \\
\varepsilon_{51}&\varepsilon_{52}&
\varepsilon_{53}&\varepsilon_{54}
&\varepsilon_{55}
\end{array}\right]
\]
with row 1 as in \eqref{nrd1}. In the
same manner, we make $b_{23}=1$
dividing column 3 by
$1+\varepsilon_{23}$, and then
$b_{21}=b_{22}=b_{24}=b_{25}=0$
subtracting column 3 (transformations
(i) and (ii)); the inverse
transformations with rows slightly
change row 3. In the obtained matrix,
we make $b_{45}=1$; then
$b_{41}=b_{42}=b_{43}=b_{44}=0$; the
inverse transformations with rows
slightly change row 5.

We have obtained a matrix of the form
\begin{equation*}\label{nrd2}
\left[\begin{array}{ccc|cc}
0&1&0&0&0
    \\
0&0&1&0&0
    \\
*&*&*&*&*
   \\\hline
0&0&0&0&1
    \\
*&*&*&*&*
\end{array}\right]\qquad
(\text{$*$'s are small numbers})
\end{equation*}
by using near-identity elementary
similarity transformations with
\eqref{nrd}.

To reduce the number of stars, we
subtract row 2 multiplied by $b_{53}$
from row 5 making $b_{53}=0$. The
inverse transformation of columns adds
column 5 multiplied by the old $b_{53}$
to column 2. Then we make
$b_{42}=b_{52}=0$ using row 1; the
inverse transformations of columns
slightly change $b_{31}$, $b_{41}$, and
$b_{51}$.

We have simultaneously reduced all
matrices \eqref{nrd} near $J$ to the
form \eqref{nrd1} by a similarity
transformation that analytically
depends on all $\varepsilon _{ij}$ and
that is identity if all $\varepsilon
_{ij}=0$.

In the same manner, all matrices
$J(0)+E$ near a nilpotent Jordan matrix
\[
J(0):=J_{m_{1}}(0)
\oplus\dots\oplus J_{m_{r}}(0),
\qquad m_{1}\ge m_{2}\ge
\dots\ge m_{r}
\]
can be reduced first to matrices of the
form
\[
\begin{bmatrix}
J_{m_{1}}(0)+0^{\downarrow}&\dots&0^{\downarrow}
\\\vdots&\dddots&\vdots\\
0^{\downarrow}&\dots&J_{m_{r}}(0)+0^{\downarrow}
\end{bmatrix}
\]
and then to matrices of the form
\eqref{hju} with $t=1$, $\lambda _1=0$,
and $m_1,\dots,m_r$ instead of
$m_{11},\dots,m_{1r_1}$.

This proves the theorem for each Jordan
matrix $J(\lambda)=J(0)+\lambda I$ with
a single eigenvalue $\lambda $ since
$S(E)^{-1}(J(\lambda)
+E)S(E)=S(E)^{-1}(J(0)+E)S(E)+\lambda
I$.
\medskip

\noindent {\it Case 2: $t\ge 2$}.  In
this case, \eqref{kje} has distinct
eigenvalues. Write \eqref{kje} in the
form $J=J_1\oplus\dots\oplus J_t$,
where each $J_i:=J_{m_{i1}}(\lambda _i)
\oplus\dots\oplus J_{m_{ir_i}}(\lambda
_i)$ is of size $n_i\times n_i$ and has
the single eigenvalue $\lambda _i$. Let
\begin{equation}\label{hte}
J+E=\begin{bmatrix}
J_1+E_{11}&\dots&E_{1t} \\
\vdots&\dddots&\vdots\\
E_{t1}&\dots&J_t+E_{tt} \\
    \end{bmatrix}
\end{equation}
be any matrix near $J$ (i.e., all
$E_{ij}$ are small). We make $E_{ij}=0$
for all $i\ne j$ by near-identity
similarity transformations as follows.

Represent \eqref{hte} in the form
$J+E^{\Swarrow}+ E^{\Nearrow} $ in
which
\[
J+E^{\Swarrow}:=\begin{bmatrix}
J_1&&&0 \\
E_{21}&J_2\\
\vdots&\ddots&\ddots\\
E_{t1}&\dots&E_{t,t-1}&J_t\\
    \end{bmatrix},\quad
E^{\Nearrow}:=
\begin{bmatrix}
E_{11}&E_{12}&\dots&E_{1t} \\
&E_{22}&\ddots&\vdots\\
&&\ddots&E_{t-1,t}\\
0&&&E_{tt}\\
    \end{bmatrix}.
\]

Let us reduce $J+E^{\Swarrow}$. Add to
its first vertical strip the second
strip multiplied by any $n_2\times n_1$
matrix $M$ to the right. Make the
inverse transformation of rows:
subtract from the second horizontal
strip the first strip multiplied by $M$
to the left. This similarity
transformation replaces $E_{21}$ with
$E_{21}+ J_2M-MJ_1$. Since $J_1$ and
$J_2$ have distinct eigenvalues, there
exists $M$ for which $E_{21}+
J_2M-MJ_1=0$ (see \cite[Chapter VIII,
\S\,3]{gan}). Moreover, $M$ is small
since $E_{21}$ is small.

In the same manner, we successively
make zero the other blocks of the first
underdiagonal
$E_{21},E_{32},\dots,E_{t,t-1}$ of
$J+E^{\Swarrow}$, then the blocks of
its second underdiagonal
$E_{31},\dots,E_{t,t-2}$, and so on.
Thus, there exists a near-identity
matrix $S_1$ such that
$S_1^{-1}(J+E^{\Swarrow})S_1=J_1\oplus\dots\oplus
J_t$.

We make the same similarity
transformation with the whole matrix
$J+E=J+E^{\Swarrow}+ E^{\Nearrow} $ and
obtain the matrix
$J+E':=S_1^{-1}(J+E)S_1$. Its
underdiagonal blocks $E'_{ij}$ ($i>j$)
coincide with the underdiagonal blocks
of $S_1^{-1}E^{\Nearrow}S_1$, which are
very small since all $E_{ij}$ are small
and the transformation is
near-identity.

We apply the same reduction to $J+E'$
and obtain a matrix
$J+E''=S_2^{-1}(J+E')S_2$ whose
underdiagonal blocks $E''_{ij}$ ($i>j$)
are very very small, and so on.

The infinite product $S_1S_2\dots$
converges to a near-identity matrix $S$
such that all underdiagonal blocks of
$J+\tilde E:=S^{-1}(J+E)S$ are zero.

By near-identity similarity
transformations, we successively make
zero the first overdiagonal $\tilde
E_{12},\tilde E_{23},\dots,\tilde
E_{t-1,t}$ of $J+\tilde E$, then its
second overdiagonal $\tilde
E_{13},\dots,\tilde E_{t-2,t}$, and so
on.

We have reduced \eqref{hte} to the
block diagonal form
$(J_1+F_1)\oplus\dots\oplus (J_t+F_t)$
in which all $F_i$ are small. Reducing
each summand $J_i+F_i$ as in Case 1, we
obtain a matrix of the form
\eqref{hju}.
\end{proof}

\begin{remark}
In the above proof we have described
sketchily how to construct the
transformation \eqref{tef}. Algorithms
for constructing this transformation
are discussed in \cite{mai,mai1}.
\end{remark}

\subsection{Change of the Jordan
canonical form by arbitrarily small
perturbations}\label{kkt}

Let $J$ be a Jordan matrix and let
$\lambda $ be its eigenvalue. Denote by
$w_{\lambda j}$ the number of Jordan
blocks $J_m(\lambda)$ of size $m\ge j$
in $J$; the sequence $(w_{\lambda 1},
w_{\lambda 2},\dots)$ is called the
\emph{Weyr characteristic} of $J$  (and
of any matrix that is similar to $J$)
for the eigenvalue $\lambda$.

The following theorem was proved by
Boer and Thijsse \cite{den-thi} and,
independently, by Markus and Parilis
\cite{mar-par}; another proof was given
by Elmroth, Johansson, and
K\r{a}gstr\"{o}m \cite[Theorem
2.2]{kag2}.

\begin{theorem}[{\cite{den-thi,mar-par}}]
\label{kjd} Let $J$ and $J'$ be Jordan
matrices of the same size. Then $J$ can
be transformed to a matrix that is
similar to $J'$ by an arbitrarily small
perturbation if and only if $J$ and
$J'$ have the same set of eigenvalues
with the same multiplicities, and their
Weyr characteristics satisfy
\[
w_{\lambda 1}\ge w'_{\lambda 1},\quad
w_{\lambda 1}+w_{\lambda 2}\ge w'_{\lambda 1}+w'_{\lambda 2},\quad
w_{\lambda 1}+w_{\lambda 2}+w_{\lambda 3}\ge
w'_{\lambda 1}+w'_{\lambda 2}+w'_{\lambda 3},
\ \dots
\]
for each eigenvalue $\lambda $.
\end{theorem}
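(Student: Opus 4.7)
The plan is to reformulate the statement as a closure problem for similarity orbits, prove necessity by a semi-continuity argument, and prove sufficiency by reducing to elementary moves in the dominance order on partitions. Throughout, write $\mathcal{O}(M)$ for the similarity class of $M$, so that the hypothesis of the theorem is equivalent to $J\in\overline{\mathcal{O}(J')}$. A direct calculation shows that for any Jordan matrix $M$ with Weyr characteristic $(w_{\lambda k})$ at $\lambda$, $\operatorname{rank}((M-\lambda I)^j)=n-(w_{\lambda 1}+\cdots+w_{\lambda j})$, so the inequalities of the theorem are equivalent to $\operatorname{rank}((J-\lambda I)^j)\le \operatorname{rank}((J'-\lambda I)^j)$ for all $\lambda$ and all $j\ge 1$.

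For necessity, assume $J+E_n\to J$ with each $J+E_n$ similar to $J'$. Continuity of the spectrum forces every eigenvalue of $J'$ (which is a root of the characteristic polynomial of $J+E_n$) to accumulate on an eigenvalue of $J$, and a multiplicity count gives that $J$ and $J'$ share eigenvalues with matching algebraic multiplicities. Lower semi-continuity of rank then yields
\[
\operatorname{rank}((J-\lambda I)^j)\le\liminf_n\operatorname{rank}((J+E_n-\lambda I)^j)=\operatorname{rank}((J'-\lambda I)^j),
\]
which is the Weyr condition.

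For sufficiency, first decompose by eigenvalue: writing $J=\bigoplus_\lambda J^{(\lambda)}$ and $J'=\bigoplus_\lambda J'^{(\lambda)}$, the disjointness of spectra of the summands means that a small perturbation of each summand combines to a small perturbation of the whole. So it suffices to treat a nilpotent $J$ with Jordan partition $\mu=(m_1\ge\cdots\ge m_r)$ and a nilpotent $J'$ with partition $\mu'$ of the same total size. The Weyr condition then becomes the standard dominance $\mu'\succeq\mu$ on these partitions (conjugation reverses dominance). Two ingredients conclude the argument: (i) orbit-closure inclusion is transitive---if $A\in\overline{\mathcal{O}(B)}$ and $B\in\overline{\mathcal{O}(C)}$, then $\mathcal{O}(B)\subseteq\overline{\mathcal{O}(C)}$ by continuity of conjugation, hence $A\in\overline{\mathcal{O}(C)}$; (ii) the dominance order on partitions of $n$ is generated by elementary moves $\mu\to\tilde\mu$ with $\tilde\mu_i=\mu_i+1$, $\tilde\mu_j=\mu_j-1$ for some $i<j$ (other parts unchanged). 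Thus it suffices to realize a single such elementary move by a small perturbation.

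To realize an elementary move, I would generalize the explicit family in \eqref{lis}: restrict attention to the two Jordan summands $J_{m_i}(0)$ and $J_{m_j}(0)$ and insert a single entry $\varepsilon$ in the off-diagonal position linking the last row of the $i$-th block with the last column of the $j$-th block, as in the first matrix of \eqref{lis}. Computing the nullities of successive powers of the perturbed matrix, one verifies that for every small $\varepsilon\ne 0$ the Jordan structure of the $(m_i+m_j)\times(m_i+m_j)$ block becomes $J_{m_i+1}(0)\oplus J_{m_j-1}(0)$, so the whole matrix has Jordan partition $\tilde\mu$. I expect this rank computation, carried out for general $m_i,m_j$, to be the main obstacle: it requires careful bookkeeping of the kernels of the powers to confirm that the Weyr characteristic jumps by exactly the correct amount and that no other Jordan block sizes are disturbed.
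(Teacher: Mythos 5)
The paper does not prove Theorem~\ref{kjd}; it states the result and cites den Boer--Thijsse, Markus--Parilis, and Elmroth--Johansson--K\aa gstr\"om for the proof, so there is no proof in the paper to compare against. Your argument is correct and self-contained, and it follows the standard line of reasoning behind this result: translate the Weyr inequalities into $\operatorname{rank}((J-\lambda I)^j)\le\operatorname{rank}((J'-\lambda I)^j)$, get necessity from lower semi-continuity of rank together with continuity of the characteristic polynomial, and get sufficiency by splitting over eigenvalues, passing to conjugate partitions, and realizing each single-box move in the dominance order by an explicit rank-one perturbation.

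The step you flagged as the main obstacle does work out cleanly. For $N=J_p(0)\oplus J_q(0)$ with $p\ge q\ge 1$ and a nonzero $\varepsilon$ in position $(p,p+q)$, the vector $e_{p+q}$ generates a chain
\[
N^{k}e_{p+q}=
\begin{cases}
e_{p+q-k}+\varepsilon e_{p-k+1}, & 1\le k\le q-1,\\
\varepsilon e_{p-k+1}, & q\le k\le p,\\
0, & k=p+1,
\end{cases}
\]
of length $p+1$, so one Jordan block has size at least $p+1$. On the other hand the subspace spanned by $e_1,\dots,e_{p+q-1}$ is $N$-invariant and $N$ acts on it as $J_p(0)\oplus J_{q-1}(0)$, which is annihilated by $N^p$; hence $N^{p+1}=0$, the nilpotency index is exactly $p+1$, and since $\dim\ker N=2$ (or $1$ if $q=1$) the Jordan type is forced to be $(p+1,q-1)$. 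Two small tidying remarks: (i) for necessity it is cleaner to say that the characteristic polynomial of $J+E_n$ equals that of $J'$ for every $n$ and converges to that of $J$, hence the two polynomials coincide, rather than appealing to accumulation of eigenvalues; (ii) for sufficiency you should invoke the fact (Brylawski) that covering relations in the dominance order on partitions of a fixed $n$ are precisely certain single-box moves, so every interval $[\mu,\mu']$ can be traversed by such moves, and combine this with your orbit-closure transitivity step. With these clarifications the argument is complete.
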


Theorem \ref{kjd} was extended to
Kronecker's canonical forms of matrix
pencils by Pokrzywa \cite{pok}.

\subsection{Closure graphs for
similarity classes}\label{sso}

\begin{definition}\label{lel}
Let $T$ be a topological space with an
equivalence relation. The \emph{closure
graph} (or \emph{closure diagram}) is
the directed graph whose vertices
bijectively correspond to the
equivalence classes and for equivalence
classes $a$ and $b$ there is a directed
path from a vertex of $a$ to a vertex
of $b$ if and only if $a\subset
\overline b$, in which $\overline b$
denotes the closure of $b$.
\end{definition}

Thus, the closure graph is the Hasse
diagram of the set of equivalence
classes with the following partial
order: $a\preccurlyeq b$ if and only if
$a\subset \overline{b}$. The closure
graph shows how the equivalence classes
relate to each other in $T$.

In this section, $T=\mathbb C^{n\times
n}$ and the equivalence relation is the
similarity of matrices. Since each
similarity class contains exactly one
Jordan matrix determined up to
permutations of Jordan blocks, we
identify the vertices with the Jordan
matrices determined up to permutations
of Jordan blocks.

Theorem \ref{kjd} admits to construct
the closure graphs due to the following
lemma.

\begin{lemma}\label{mjt}
The closure graph for similarity
classes of $n\times n$ matrices
contains a directed path from a Jordan
matrix $J$ to a Jordan matrix $J'$ if
and only if $J$ can be transformed to a
matrix that is similar to $J'$ by an
arbitrarily small perturbation.
\end{lemma}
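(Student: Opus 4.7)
Unpacking Definition \ref{lel}, a directed path from $J$ to $J'$ in the closure graph exists exactly when the similarity class $[J]$ is contained in the closure $\overline{[J']}$ of the similarity class of $J'$. On the other hand, the statement ``$J$ can be transformed to a matrix similar to $J'$ by an arbitrarily small perturbation'' means precisely that for every $\varepsilon>0$ there exists $E$ with $\|E\|<\varepsilon$ and $J+E\in [J']$; i.e., $J\in \overline{[J']}$. So the plan is to show that, for $J$ a Jordan matrix, these two conditions are equivalent.

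The implication ($\Rightarrow$) is immediate: if $[J]\subset \overline{[J']}$, then in particular $J\in\overline{[J']}$, so there is a sequence of matrices $A_n\in [J']$ with $A_n\to J$; setting $E_n:=A_n-J$ gives arbitrarily small perturbations of $J$ that are similar to $J'$.

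For ($\Leftarrow$), the key observation is that the closure $\overline{[J']}$ is itself invariant under similarity. Indeed, for any nonsingular $P$, conjugation $X\mapsto P^{-1}XP$ is a homeomorphism of $\mathbb C^{n\times n}$ that maps $[J']$ onto itself, hence maps $\overline{[J']}$ onto $\overline{[J']}$. Therefore, once we know that $J\in\overline{[J']}$ (which is what the perturbation hypothesis gives us), we automatically get $P^{-1}JP\in\overline{[J']}$ for every nonsingular $P$, i.e. $[J]\subset\overline{[J']}$.

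There is no serious obstacle here: the statement is almost a tautology once one unwinds Definition \ref{lel}. The only substantive ingredient is the similarity-invariance of $\overline{[J']}$, which in turn rests only on the fact that conjugation is a homeomorphism. Note that the hypothesis that $J$ and $J'$ are Jordan matrices plays no essential role in the argument; the lemma holds for arbitrary representatives of the two similarity classes, and the Jordan normalization is merely convenient because it fixes one canonical vertex per class, matching the identification made just above the statement of the lemma.
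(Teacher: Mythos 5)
Your proposal is correct and follows essentially the same approach as the paper: both reduce the statement to the observation that $\overline{[J']}$ is invariant under similarity, which the paper verifies concretely by conjugating a convergent sequence $J+E_i\to J$ in $[J']$ to obtain $A+S^{-1}E_iS\to A$ for any $A=S^{-1}JS\in[J]$, while you phrase it more abstractly via conjugation being a homeomorphism. The paper only writes out the ``$\Longleftarrow$'' direction (treating ``$\Longrightarrow$'' as immediate, as you also note), so your write-up of that direction is just making the omitted triviality explicit.
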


\begin{proof} Denote by $[M]$ the
similarity class of a square matrix
$M$.

``$\Longleftarrow$'' Let $J$ can be
transformed to a matrix that is similar
to $J'$ by an arbitrarily small
perturbation. Then there exists a
sequence of matrices $J+E_1,\ J+E_2,\
J+E_3,\,\dots$ in $[J']$ that converges
to $J$. This means that
$J\in\overline{[J']}$. Let $A\in [J]$;
i.e., $A=S^{-1}JS$ for some $S$. Then
the sequence of matrices
$S^{-1}(J+E_i)S=A+S^{-1}E_iS$
($i=1,2,\dots$) in $[J']$ converges to
$A$, and so $A\in\overline{[J']}$.
Therefore, $[J]\subset\overline{[J']}$
and there is a directed path from $J$
to $J'$.
\end{proof}

\begin{corollary}
By Theorem \ref{kjd}, the arrows are
only between Jordan matrices with the
same sets of eigenvalues. Let $J$ be a
Jordan matrix.

\begin{itemize}
\item Let $J'$ be a Jordan matrix
    of the same size. Each
    neighborhood of $J$ contains a
    matrix whose Jordan canonical
    form is $J'$ if and only if
there is a directed path from
    $J$ to $J'$ $($if $J=J'$ then
    there always exists the
    ``lazy''
      path of length $0$ from $J$
    to $J')$.

  \item The closure of the
      similarity class of $J$ is
      equal to the union of the
      similarity classes of all
      Jordan matrices $J'$ such
      that there is a directed path
      from $J'$ to $J$ $($if $J=J'$
      then  there always exists the
  ``lazy''
      path$)$.
\end{itemize}
\end{corollary}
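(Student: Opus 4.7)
The plan is to derive both bullets from Lemma \ref{mjt} after a single preliminary observation: the closure $\overline{[J]}$ of a similarity class is itself similarity-invariant, and hence a union of similarity classes. I would check this by picking $A=\lim_n S_n^{-1}JS_n\in\overline{[J]}$ and noting that for any nonsingular $T$ the sequence $(S_nT)^{-1}J(S_nT)$ converges to $T^{-1}AT$, so $T^{-1}AT\in\overline{[J]}$ as well.

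For the first bullet, I would rephrase ``every neighborhood of $J$ contains a matrix whose Jordan canonical form is $J'$'' as ``$J$ is a limit of matrices similar to $J'$'', i.e., $J\in\overline{[J']}$. This is precisely the condition ``$J$ can be transformed to a matrix similar to $J'$ by an arbitrarily small perturbation'' from Lemma \ref{mjt}, which the lemma identifies with the existence of a directed path from $J$ to $J'$. The ``lazy'' zero-length path in the case $J=J'$ just encodes the trivial fact that $J$ itself belongs to every neighborhood of $J$.

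For the second bullet, the invariance observation above yields the decomposition
\[
\overline{[J]}=\bigcup\{[J'] : J'\text{ a Jordan matrix with } [J']\subset\overline{[J]}\},
\]
so it remains only to identify the index set. By the partial order defining the closure graph, $[J']\subset\overline{[J]}$ is exactly $[J']\preccurlyeq[J]$, i.e., the existence of a directed path from $J'$ to $J$; equivalently, $J'\in\overline{[J]}$ says that $J'$ is a limit of matrices similar to $J$, so applying Lemma \ref{mjt} with the roles of $J$ and $J'$ swapped produces such a path. There is really no hard step: beyond the two-line check of similarity-invariance, everything is a straight translation between the perturbation-theoretic description (``arbitrarily small perturbations'') and the order-theoretic description (directed paths in the Hasse diagram) that Lemma \ref{mjt} already provides.
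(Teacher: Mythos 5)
Your argument is correct and matches the intended reading of the paper: the first bullet is a direct rephrasing of Lemma \ref{mjt} (with the ``lazy'' path covering the trivial case $J=J'$), and the second bullet follows from Definition \ref{lel} once one knows that $\overline{[J]}$ is a union of similarity classes, which you justify with exactly the similarity-invariance computation that the paper's proof of Lemma \ref{mjt} already uses implicitly (conjugating a convergent sequence $S_n^{-1}JS_n$ by a fixed $T$). The paper leaves the Corollary without an explicit proof, and your writeup supplies precisely the missing routine translation, so there is nothing to flag.
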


\begin{example}\label{ex2}
Let us construct the closure graph for
similarity classes of ${4\times 4}$
matrices. Each Jordan matrix is a
direct sum of Jordan blocks
$J_m(\lambda )$. Replacing them by
$\lambda^m$ and deleting the symbols
$\oplus$, we get the compact notation
of Jordan matrices which was used by
Arnold \cite{arn}. For example,
\begin{equation}\label{mjw}
\lambda^2\lambda\mu\quad\text{is}\quad
J_2(\lambda )\oplus J_1(\lambda)\oplus
J_1(\mu)
\end{equation}
(we write $\lambda,\mu $ instead of
$\lambda ^1,\mu^1$).

For all Jordan matrices of size
${4\times 4}$ with eigenvalue $0$, we
have
\begin{center}
\begin{tabular}{|c|c|c|}
\hline
 Jordan  & its Weyr characte-
& the sequence $(w_1,w_1+w_2,$\\
matrix& ristic  $(w_1,w_2,w_3,w_4)$&
$w_1+w_2+w_3,w_1+w_2+w_3+w_4)$
\\
\hline
0000 & (4,0,0,0)&(4,4,4,4)\\
$0^200$ & (3,1,0,0)&(3,4,4,4)\\
$0^20^2$ & (2,2,0,0)&(2,4,4,4)\\
$0^30$ & (2,1,1,0)&(2,3,4,4)\\
$0^4$ & (1,1,1,1)&(1,2,3,4)\\
\hline
\end{tabular}
\end{center}

Using this table, Theorem \ref{kjd},
and Lemma \ref{mjt}, it is easy to
construct the following closure graph
for similarity classes of nilpotent
$\mathbf{4\times 4}$ matrices:
\begin{equation*}\label{lbt}
0000\to 0^200\to 0^20^2\to
0^30\to 0^4
\end{equation*}

In the same way, one can construct the
closure graph for similarity classes of
all $4\times 4$ matrices, which is
presented in Figure \ref{kib1}.
\begin{figure}[hbt]
\begin{equation}\label{ky}
\begin{split}
\xymatrix@C=8pt@R=8pt{
{\lambda^4}&{\lambda^3\mu}
&{\lambda^2\mu^2}
&{\lambda^2\mu\nu}
&{\lambda\mu\nu\xi}&&
   {\text{dimension }12}
\\
{\lambda^3\lambda }\ar[u]
&{\lambda^2\lambda\mu}\ar[u]
&{\lambda^2\mu\mu}\ar[u]
&{\lambda\lambda \mu\nu }\ar[u]&&&
   {\text{dimension }10}
\\
{\lambda ^2\lambda ^2}\ar[u]
&&{\lambda\lambda\mu\mu}\ar[u]
&&&&
   {\text{dimension }8}
\\
{\lambda ^2\lambda \lambda }\ar[u]
&{\lambda \lambda \lambda \mu}\ar[uu]
&&&&&
   {\text{dimension }6}
\\
{\lambda \lambda \lambda \lambda }\ar[u]
&&&&&&
   {\text{dimension }0}
      }
\end{split}
\end{equation}
\caption{\small\it The closure graph
for similarity classes of
$4\times 4$
matrices}\label{kib1}
\end{figure}
The graph is infinite:
$\lambda,\mu,\nu,\xi$ are arbitrary
distinct complex numbers. The
similarity classes of  $4\times 4$
Jordan matrices $J$ that are located at
the same horizontal level in \eqref{ky}
have the same dimension (defined in
\eqref{kow}), which is indicated to the
right and is calculated as follows: it
equals $16-\codim_{\mathbb C} T(J)$, in
which $\codim_{\mathbb C} T(J)$ is the
number of stars in \eqref{hju} (see
\eqref{kow} and Theorem \ref{teo2}).
For example, if $J$ is \eqref{mjw} with
$\lambda \ne\mu $, then \eqref{hju} is
\[
\left[\begin{array}{cc|c|c}
\lambda &1&0&0
    \\
*&\lambda+* &*&0
    \\ \hline
*&0&\lambda +*&0
   \\\hline
0&0&0&\mu+*
\end{array}\right]
\]
and so $\dim_{\mathbb
C}(J)=16-\codim_{\mathbb C} T(J)
=16-6=10.$
\end{example}

The following example shows that the
structure of the closure graph for
larger matrices is not so simple as in
\eqref{ky}.

\begin{example}\label{ex1}
The closure graph for similarity
classes of $6\times 6$ nilpotent
matrices is presented in Figure
\ref{kib}.
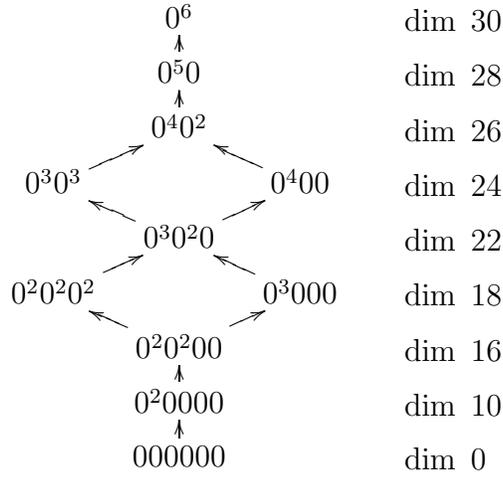
\begin{figure}[hbt]
\begin{equation*}
\xymatrix@R=6pt@C=8pt{
 &{0^6}&&
 *[r]{\dim\ 30}
    \\
 &{0^50}\ar[u]&&
 *[r]{\dim\ 28}
    \\
 &{0^40^2}\ar[u]&&
 *[r]{\dim\ 26}
    \\
{0^30^3}\ar[ur]&&{0^400}
\ar[ul]& *[r]{\dim\
24}
    \\
 &{0^30^20}\ar[ul]\ar[ur]&&
 *[r]{\dim\ 22}
    \\
{0^20^20^2}\ar[ur]&&{0^3000}\ar[ul]&
*[r]{\dim\ 18}
    \\
 &{0^20^200}\ar[ul]\ar[ur]&&
 *[r]{\dim\ 16}
    \\
&{0^20000}\ar[u] & &
*[r]{\dim\ 10}
      \\
&{000000}\ar[u]& &
*[r]{\dim\ 0}
}
\end{equation*}
\caption{\small\it The closure graph
for similarity classes of
$6\times 6$ nilpotent
matrices}\label{kib}
\end{figure}
This graph was taken from \cite[Figures
3 and 22]{joh}, where P. Johansson
describes the StratiGraph, which is a
software tool for constructing the
closure graphs for similarity classes
of matrices, for strict equivalence
classes of matrix pencils, and for
bundles of matrices and pencils (see
Section \ref{jjs} about bundles and the
web page
\begin{center}
http://www.cs.umu.se/english/research/groups/matrix-computations/stratigraph/
\end{center}
about the StratiGraph).
\end{example}

\subsection{Closure graphs for
similarity bundles}\label{jjs}

Arnold \cite[\S\,5.3]{arn} defines a
\emph{bundle of matrices under
similarity} as a set of all matrices
having the same \emph{Jordan type},
which is defined as follows: matrices
$A$ and $B$ have the same Jordan type
if there is a bijection from the set of
distinct eigenvalues of $A$ to the set
of distinct eigenvalues of $B$ that
transforms the Jordan canonical form of
$A$ to the Jordan canonical form of
$B$. For example, the Jordan matrices
\begin{equation*}\label{lic}
J_3(0)\oplus J_2(0)\oplus
J_5(1),\qquad J_3(2)\oplus J_2(2)\oplus
J_5(-3)
\end{equation*}
belong to the same bungle. All matrices
of a bundle have similar properties and
not only with respect to perturbations;
for example, its Jordan canonical
matrices have the same set of commuting
matrices.

Note that the closure graph for bundles
of $n\times n$ matrices under
similarity has a finite number of
vertices; moreover, it is in some sense
more informative than the closure graph
for similarity classes. For example,
one cannot see from the latter graph
that each neighborhood of $J_n(\lambda
)$ contains a matrix with $n$ distinct
eigenvalues (since there is no diagonal
matrix whose similarity class has a
nonzero intersection with each
neighborhood of $J_n(\lambda )$). But
the closure graph for bundles has an
arrow from the bundle containing
$J_n(\lambda )$ to the bundle of all
matrices with $n$ distinct eigenvalues.

Furthermore, not every convergent
sequence of $n\times n$ matrices
\begin{equation}\label{ktc}
B_1,B_2,\ldots\to A,
\end{equation}
in which all $B_i$ are not similar to
$A$, gives a directed path in the
closure graph for similarity classes.
But every sequence \eqref{ktc}, in
which all $B_i$ do not belong to the
bundle $\cal A$ that contains $A$,
gives at least one directed path in the
closure graph for similarity bundles.
Indeed, the number of bundles of
$n\times n$ matrices is finite, and so
there is an infinite subsequence
$B_{n_1},B_{n_2},\ldots\to A$ in which
all $B_{n_i}$ belong to the same bundle
$\cal B$. Hence $A\in\overline{\cal
B}$. One can prove that ${\cal
A}\subset\overline{\cal B}$.

\begin{example}\label{ex3}
The closure graph for similarity
bundles of ${4\times 4}$ matrices is
presented in Figure \ref{kib3} (it is
given in another form in Johansson's
guide \cite[Figure 24]{joh}).
\begin{figure}[hbt]
\begin{equation}\label{nkt}
\begin{split}
\xymatrix@C=3pt@R=3pt{
&&&&{\lambda\mu\nu\xi}&
{\dim\ 16}
\\
&&&{\lambda^2\mu\nu }\ar[ur]&&
{\dim\ 15}
\\
&{\lambda^3\mu }\ar[urr]&
{\lambda^2\mu^2}\ar[ur]&&&
{\dim\ 14}
\\
{\lambda^4}\ar[ur]\ar[urr]&&&
{\lambda\lambda \mu\nu }\ar[uu]&
{\dim\ 13}
\\
&{\lambda^2\lambda\mu}\ar[uu]\ar[urr]&
{\lambda^2\mu \mu}\ar[uu]\ar[ur]&&
{\dim\ 12}
\\
{\lambda^3\lambda
}\ar[uu]\ar[ur]\ar[urr]&&&&
{\dim\ 11}
\\
&&{\lambda\lambda\mu\mu}\ar[uu]&&
{\dim\ 10}
\\
{\lambda ^2\lambda ^2}\ar[uu]\ar[urr]&&
&&{\dim\ 9\ }
\\
&{\lambda \lambda \lambda \mu}\ar[uuuu]
&&{\dim\ 8}
\\
{\lambda ^2\lambda \lambda
}\ar[ur]\ar[uu]&&& {\dim\ 7}
\\
{\lambda \lambda \lambda \lambda
}\ar[u]&&& {\dim\ 1}
      }
\end{split}
\end{equation}
\caption{\small\it The closure graph for similarity
bundles of ${4\times 4}$ matrices}
\label{kib3}
\end{figure}
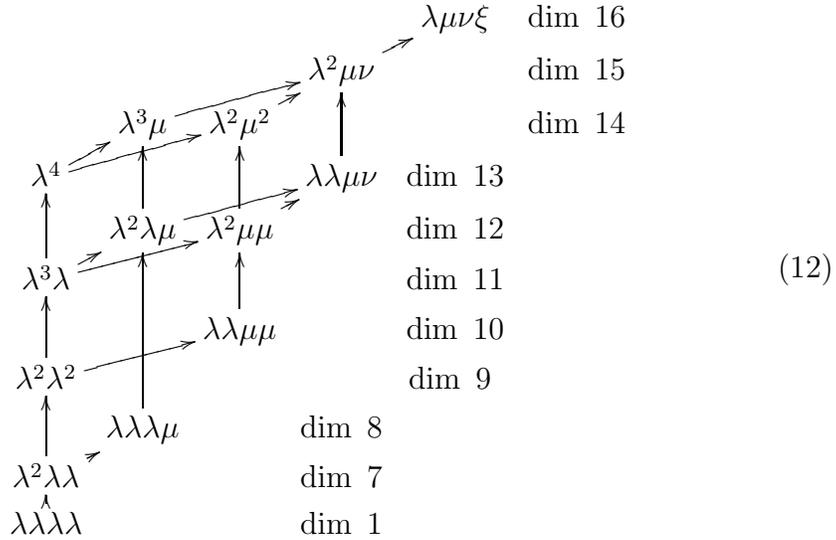

Let us compare \eqref{ky} and
\eqref{nkt}. The graph \eqref{ky} is
infinite; it is the disjoint union of
linear subgraphs that are obtained from
\begin{equation}\label{kud}
\lambda\lambda\lambda\lambda\to \lambda^2\lambda\lambda
\to\cdots\to\lambda^4,\quad
\lambda\lambda\lambda\mu \to\lambda^2\lambda\mu\to \lambda^3\mu,\
\dots,\ \lambda\mu\nu\xi
\end{equation}
by replacing their parameters by
unequal complex numbers (the numbers of
parameters in the vertices of the
linear subgraphs \eqref{kud} are equal
to 1, 2, 2, 3, 4, respectively). Thus,
although the sequences of Greek letters
in the vertices of \eqref{ky} and
\eqref{nkt} are the same, each vertex
of \eqref{ky} represents an infinite
set of similarity classes whose
matrices have the same Jordan type (and
so these similarity classes have the
same dimension), whereas the
corresponding vertex in \eqref{nkt}
represents only one bundle, which is
the union of these similarity classes;
its dimension is equal to the dimension
of any of its similarity classes plus
the number of parameters. Notice that
each arrow of \eqref{ky} corresponds to
an arrow of \eqref{nkt}, but
\eqref{nkt} has additional arrows.
\end{example}

\section{Perturbations of matrices
determined up to
congruence}\label{ssss}

Dmytryshyn, Futorny, and Sergeichuk
\cite{f_s} constructed miniversal
deformations of the following
congruence canonical matrices given by
Horn and Sergeichuk
\cite{hor-ser_transp,hor-ser_can}:

\begin{quote}
\emph{Every square
      complex matrix is congruent
      to a direct sum, determined
      uniquely up to permutation of
      summands, of matrices of the
      form
\begin{equation*}
\label{can}
\begin{bmatrix}0&I_m\\
J_m(\lambda) &0
\end{bmatrix},
 \qquad
\begin{bmatrix} 0&&&&
\udots
\\&&&-1&\udots
\\&&1&1\\ &-1&-1& &\\
1&1&&&0
\end{bmatrix},
 \qquad
J_k(0),
  \end{equation*}
in which $\lambda\in\mathbb
C\smallsetminus\{0,(-1)^{m+1}\}$ and is
determined up to replacement by
$\lambda^{-1}$. }
\end{quote}

The miniversal deformations
\cite[Theorem 2.2]{f_s} of congruence
canonical matrices are rather
cumbersome, so we give them only for
$2\times 2$ and $3\times 3$ matrices.

\begin{theorem}[{\cite[Example 2.1]{f_s}}]
\label{le_def_a} Let $A$ be any
$2\times 2$ or $3\times 3$ matrix. Then
all matrices $A+X$ that are
sufficiently close to $A$ can be
simultaneously reduced by some
transformation
\begin{equation}\label{dtg}
{\cal
S}(X)^T (A+X) {\cal
S}(X),\quad\text{${\cal S}(X)$ is holomorphic
at $0$,}
\end{equation}
to one of the following forms, in which
$\lambda \in \mathbb
C\smallsetminus\{-1,1\}$ and each
nonzero $\lambda $ is determined up to
replacement by $\lambda^{-1} $.

$\bullet$ If $A$ is $2\times 2$:
\begin{align*}
&\begin{bmatrix} 0&\\
&0
 \end{bmatrix}+
\begin{bmatrix}
 *&*\\ *&*
 \end{bmatrix},
                            &&
\begin{bmatrix} 1&\\
&0
 \end{bmatrix}+
\begin{bmatrix}
 0&0\\ *&*
 \end{bmatrix},
                         &&
\begin{bmatrix} 1&\\
&1
 \end{bmatrix}+
\begin{bmatrix}
 0&0\\ *&0
 \end{bmatrix},
                          \\&
\begin{bmatrix} 0&1\\
-1&0
 \end{bmatrix}+
\begin{bmatrix}
 *&0\\ *&*
 \end{bmatrix},
                       &&
\begin{bmatrix} 0&-1\\
1&1
 \end{bmatrix}+
\begin{bmatrix}
 *&0\\ 0&0
 \end{bmatrix},
                         &&
\begin{bmatrix} 0&1\\
\lambda &0
 \end{bmatrix}+
\begin{bmatrix}
 0&0\\ *&0
 \end{bmatrix}.
\end{align*}

$\bullet$  If $A$ is $3\times 3$:
\begin{longtable}{ll}
$\begin{bmatrix}
0&&\\
&0&\\&&0
 \end{bmatrix}+
\begin{bmatrix}
 *&*&*\\  *&*&*\\ *&*&*
 \end{bmatrix},$
                             &
$\begin{bmatrix} 1&&\\
&0&\\&&0
 \end{bmatrix}+
\begin{bmatrix}
 0&0&0\\  *&*&*\\ *&*&*
 \end{bmatrix},$\vspace{3pt}
                           \\
$\begin{bmatrix} 1&&\\
&1&\\&&0
 \end{bmatrix}+
\begin{bmatrix}
 0&0&0\\  *&0&0\\ *&*&*
 \end{bmatrix},$
                            &
$\begin{bmatrix} 1&&\\ &1&\\&&1
 \end{bmatrix}+
\begin{bmatrix}
 0&0&0\\  *&0&0\\
 *&*&0
 \end{bmatrix},$\vspace{3pt}
                       \\
$\begin{bmatrix} 0&1&\\
-1&0&\\&&0
 \end{bmatrix}+
\begin{bmatrix}
 *&0&0\\  *&*&0\\ *&*&*
 \end{bmatrix},$
                      &
$\begin{bmatrix} 0&1&\\ \lambda
&0&\\&&0
 \end{bmatrix}+
\begin{bmatrix}
 0&0&0\\  *&0&0\\
 *&*&*
 \end{bmatrix}
\ (\lambda\ne 0),$\vspace{3pt}
                        \\
$\begin{bmatrix} 0&1&\\ 0 &0&\\&&0
 \end{bmatrix}+
\begin{bmatrix}
 0&0&0\\  *&0&*\\
 *&0&*
 \end{bmatrix},$
                          &
$\begin{bmatrix} 0&-1&\\ 1&1&\\&&0
 \end{bmatrix}+
\begin{bmatrix}
 *&0&0\\  0&0&0\\ *&*&*
 \end{bmatrix},$\vspace{3pt}
                           \\
$\begin{bmatrix} 0&1&\\
-1&0&\\&&1
 \end{bmatrix}+
\begin{bmatrix}
 *&0&0\\  *&*&0\\
 0&0&0
 \end{bmatrix},$
                         &
$\begin{bmatrix} 0&1&\\ \lambda
&0&\\&&1
 \end{bmatrix}+
\begin{bmatrix}
 0&0&0\\  *&0&0\\
 0&0&0
 \end{bmatrix},$\vspace{3pt}
                   \\
$ \begin{bmatrix} 0&-1&\\ 1&1&\\&&1
 \end{bmatrix}+
\begin{bmatrix}
 *&0&0\\  0&0&0\\
 0&0&0
 \end{bmatrix},$
                      &
$\begin{bmatrix} 0&1&0\\ 0&0&1\\0&0&0
 \end{bmatrix}+
\begin{bmatrix}
 0&0&0\\  0&0&0\\
 *&0&*
 \end{bmatrix},$\vspace{3pt}
                     \\
$ \begin{bmatrix} 0&0&1\\
0&-1&-1\\1&1&0
 \end{bmatrix}+
\begin{bmatrix}
 0&0&0\\  *&0&0\\
 0&0&0
 \end{bmatrix}.$
\end{longtable}

Each of these matrices has the form
$A_{\rm can}+{\cal D}$ in which $A_{\rm
can}$ is a canonical matrix for
congruence and the stars in ${\cal D}$
are complex numbers that tend to zero
as $X$ tends to zero. The number of
stars is the smallest that can be
attained by using transformations
\eqref{dtg}; it is equal to the
codimension of the congruence class of
$A$.
\end{theorem}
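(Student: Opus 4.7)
The plan is to adapt the strategy of Theorem \ref{teo2} to the congruence setting. The first step is to establish the congruence analog of Lemma \ref{nbf}: two matrices are congruent if and only if one is obtained from the other by a sequence of \emph{elementary congruence transformations}, namely (i) multiplying column $i$ by a nonzero $a$ and then multiplying row $i$ by $a$; (ii) adding $b$ times column $i$ to column $j$ and then adding $b$ times row $i$ to row $j$; (iii) interchanging columns $i$ and $j$ and then interchanging rows $i$ and $j$. As in Lemma \ref{nbf}, this is proved by writing a nonsingular $S$ as a product $S=E_1\cdots E_t$ of elementary matrices, so that $S^T A S=E_t^T\cdots E_1^T A E_1\cdots E_t$ realises the congruence as a sequence of such transformations.

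Next, for each canonical matrix $A_{\rm can}$ in the $2\times 2$ and $3\times 3$ lists, I would take a generic small perturbation $A_{\rm can}+X$ and zero out its entries one by one by near-identity elementary congruence transformations, leaving only the entries in the marked star positions. The critical difference from the similarity argument is that the row operation accompanying a column operation in (ii) acts in the \emph{same} sense as the column operation rather than as its inverse, so the order of the eliminations must be chosen so that each forced row move either changes nothing already fixed or changes it only by a quantity of higher order in the entries of $X$. Each entry is cleared by pivoting against a nonzero entry of $A_{\rm can}$, and the residual second-order corrections are absorbed by the infinite-product / convergence device used in Case 2 of the proof of Theorem \ref{teo2}, producing a holomorphic $\mathcal{S}(X)$ with $\mathcal{S}(0)=I$.

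When $A_{\rm can}$ is a direct sum of several indecomposable canonical blocks, one first decouples the summands. A block near-identity congruence with $\begin{bmatrix} I & M \\ N & I \end{bmatrix}$ replaces the off-diagonal blocks $E_{12},E_{21}$ of the perturbation to first order by $E_{12}+A_1 M+N^T A_2$ and $E_{21}+M^T A_1+A_2 N$; one needs this coupled Sylvester-type system to be solvable with small $(M,N)$ whenever the summands $A_1,A_2$ are indecomposable canonical blocks of distinct congruence type. For the small sizes considered this is verifiable by direct inspection (and conceptually it follows from the Horn--Sergeichuk indecomposability of the canonical blocks). Iterating gives a block-diagonal reduction, after which the previous paragraph applies to each summand separately.

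Minimality of the number of stars -- equivalently, the equality of this number with $\codim_{\mathbb C} T(A_{\rm can})$ where $T(A):=\{X^T A+AX\,|\,X\in{\mathbb C}^{n\times n}\}$ is the tangent space to the congruence orbit at $A$ -- is checked by a routine linear-algebra dimension count for each listed canonical matrix. The main obstacle is less conceptual than bookkeeping: every canonical form on the list demands its own choice of elimination order and its own tangent-space calculation, and the block-decoupling step is appreciably more delicate than in the similarity case, because congruence offers no direct analog of ``distinct eigenvalues'' to drive the Sylvester system automatically.
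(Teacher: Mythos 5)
The paper itself does not prove this theorem: it is a survey, and Theorem~\ref{le_def_a} is quoted verbatim from \cite[Example~2.1]{f_s}. So there is nothing in the paper to compare against; I can only judge the proposal on its own terms, and there is a genuine gap in it.

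Your block-decoupling step asserts that the coupled system $E_{12}+A_1M+N^TA_2=0$, $E_{21}+M^TA_1+A_2N=0$ is solvable in small $(M,N)$ whenever $A_1,A_2$ are indecomposable congruence canonical blocks of distinct type, and you conclude that iterating produces a genuine block-diagonal reduction, after which each summand is handled separately. That is false, and the very list in the theorem shows it. Take $A_1=[1]$, $A_2=[0]$. Then $A_1M+N^TA_2=M$ and $M^TA_1+A_2N=M^T$, so the map $(M,N)\mapsto(A_1M+N^TA_2,\;M^TA_1+A_2N)$ has rank $1$, not $2$; the pair $(E_{12},E_{21})$ cannot both be killed. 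This is precisely why the miniversal deformation of $\left[\begin{smallmatrix}1&0\\0&0\end{smallmatrix}\right]$ carries a star in the off-diagonal position $(2,1)$, i.e.\ the normal form is $\left[\begin{smallmatrix}1&0\\0&0\end{smallmatrix}\right]+\left[\begin{smallmatrix}0&0\\ *&*\end{smallmatrix}\right]$ and not a direct sum of deformations of $[1]$ and $[0]$. The same obstruction produces the off-diagonal stars in most of the $3\times3$ forms (e.g.\ $[1]\oplus[1]\oplus[0]$, $\left[\begin{smallmatrix}0&1\\-1&0\end{smallmatrix}\right]\oplus[0]$, etc.). Indecomposability of the Horn--Sergeichuk blocks says nothing about the surjectivity of this coupled Sylvester map; in the similarity case the decoupling works because ``distinct eigenvalue'' forces a bijective Sylvester operator, but congruence has no such dichotomy, and the structure of the off-diagonal stars is exactly the cokernel of that map. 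A correct route is to compute the tangent space $T(A_{\mathrm{can}})=\{X^TA_{\mathrm{can}}+A_{\mathrm{can}}X\}$, choose a complementary subspace $\mathcal D$ of the indicated star pattern, and then run the iterative (or implicit-function-theorem) reduction only onto $A_{\mathrm{can}}+\mathcal D$, never onto a block-diagonal; your elimination argument must be organised so that it terminates at the complement $\mathcal D$, not at zero off-diagonal blocks.
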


The codimension of the congruence class
of a congruence canonical matrix
$A\in{\mathbb C}^{n\times n}$ was
calculated by Dmytryshyn, Futorny, and
Sergeichuk \cite{f_s} and independently
by  De Ter\'an and Dopico
\cite{ter-dor}; it is defined as
follows. For each small matrix
$X\in{\mathbb C}^{n\times n}$,
\[
(I+X)^TA(I+X)
=A+\underbrace{X^T A+ AX}
_{\text{small}}
+\underbrace{X^T AX}
_{\text{very small}}
\]
and so the congruence class of $A$ in a
small neighborhood of $A$ can be
obtained by a very small deformation of
the affine matrix space $\{A+ X^T A+
AX\,|\,X\in{\mathbb C}^{n\times n}\}$.
(By the local Lipschitz property
\cite{rodm}, if $A$ and $B$ are close
to each other and $B=S^{T}AS$ with a
nonsingular $S$, then $S$ can be taken
near $I_n$.)

The vector space
\[
T(A):=\{X^TA+AX\,|\,X\in{\mathbb
C}^{n\times n}\}
\]
is the tangent space to the congruence
class of $A$ at the point $A$. The
numbers
\[
\dim_{\mathbb C} T(A),\qquad \codim_{\mathbb C} T(A):= n^2-\dim_{\mathbb C} T(A)
\]
are called the \emph{dimension} and
\emph{codimension} of the congruence
class of $A$.

Congruence bundles are defined by
Futorny, Klimenko, and Sergeichuk
\cite{f-k-s_cov_congr} via bundles of
matrix pairs under equivalence. Recall,
that  pairs $(A,B)$ and $(A',B')$ of
$m\times n$ matrices are
\emph{equivalent} if there are
nonsingular $R$ and $S$ such that
$RAS=A'$ and $RBS=B'$. By Kronecker's
theorem about matrix pencils
\cite[Chapter XII, \S\,3]{gan}, each
pair $(A,B)$ of matrices of the same
size is equivalent to
\begin{equation}\label{lui}
{\cal L}\oplus {\cal P}_1(\lambda_1)
\oplus\dots\oplus {\cal P}_t(\lambda_t),
\quad \lambda_i\ne \lambda _j\text{ if }
i\ne j,\quad \lambda_1,\dots,\lambda_t
\in\mathbb C\cup\infty,
\end{equation}
in which ${\cal L}$ is a direct sum of
pairs of the form $(L_k,R_k)$ and
$(L_k^T,R_k^T)$, $k=1,2,\dots$, defined
by
\begin{equation*}\label{1.4}
L_k:=\begin{bmatrix}
1&0&&0\\&\ddots&\ddots&\\0&&1&0
\end{bmatrix},\quad
R_k:=\begin{bmatrix}
0&1&&0\\&\ddots&\ddots&\\0&&0&1
\end{bmatrix}\quad\text{($(k-1)$-by-$k$)},
\end{equation*}
and each ${\cal P}_i(\lambda_i)$ is a
direct sum of pairs of the form
\[
\text{$(I_k,J_k(\lambda_i))$ if $\lambda_i
\in \mathbb C$\quad or\quad $(J_k(0),I_k)$ if
$\lambda_i = \infty$.}
\]
The direct sums ${\cal L}$ and ${\cal
P}_i(\lambda_i)$ are determined by
$(A,B)$ uniquely, up to permutation of
summands. The \emph{equivalence bundle}
of \eqref{lui} consists of all matrix
pairs that are equivalent to pairs of
the form
\[
{\cal L}\oplus {\cal P}_1(\mu_1)
\oplus\dots\oplus {\cal P}_t(\mu_t),
\quad \mu_i\ne \mu_j\text{ if }
i\ne j,\quad \mu_1,\dots,\mu_t
\in\mathbb C\cup\infty,
\]
with the same ${\cal L},{\cal P}_1,
\dots, {\cal P}_t$ (see \cite{kag}).

The definition of bundles of matrices
under congruence is not so evident.
They could be defined via the
congruence canonical form by analogy
with bundles of matrices under
similarity and bundles of matrix pairs,
but, unlike the Jordan and Kronecker
canonical forms, the \emph{perturbation
behavior of a congruence canonical
matrix with parameters depends on the
values of its parameters}, which is
illustrated by the canonical matrices
$\left[\begin{smallmatrix} 0&1\\ -1&0&
\end{smallmatrix}\right]$ and $\left[\begin{smallmatrix}
0&1\\ \lambda&0
\end{smallmatrix}\right]$ in the
left graph in Figure \ref{kib4}.

\begin{definition}
[{\cite{f-k-s_cov_congr}}]\label{lqw}
Two square matrices $A$ and $B$ are in
the same \emph{congruence bundle} if
and only if the pairs $(A, A^T)$ and
$(B, B^T)$ are in the same equivalence
bundle.
\end{definition}

Definition \ref{lqw} is based on
Roiter's statement (see \cite[Lemma
4.1]{f-k-s_cov_congr}): two $n\times n$
matrices $A$ and $B$ are congruent if
and only if the pairs $(A, A^T)$ and
$(B, B^T)$ are equivalent.

\begin{example}\label{s5.1}
The closure graphs for congruence
classes and congruence bundles of
$2\times 2$ matrices are presented in
Figure \ref{kib4}; they were
constructed by Futorny, Klimenko, and
Sergeichuk \cite{f-k-s_cov_congr}.
\begin{figure}[hbt]
\[
\raisebox{-55pt}{\xymatrix@R=3pt@C=6pt{
{\begin{bmatrix} 0&-1\\ 1&1
 \end{bmatrix}}
    &
{\begin{bmatrix} 0&1\\ {\lambda} &0
\end{bmatrix}}
   &{\begin{bmatrix} 1&\\
&1 \end{bmatrix}}
    \\ \\
  &{\begin{bmatrix}
1&\\ &0
 \end{bmatrix}}\ar[uu] \ar[luu]
\ar[uur]&
    \\
{\begin{bmatrix} 0&1\\
-1&0
 \end{bmatrix}}\ar[uuu]& &
      \\
&{\begin{bmatrix} 0&\\
&0
 \end{bmatrix}}
\ar[ul]\ar[uu]&}}
 \xymatrix@R=1pt@C=-3pt{
{\phantom{{}_{\lambda \ne\pm
1}}\left\{\begin{bmatrix} 0&1\\
{\lambda}
&0
 \end{bmatrix}\right\}_{\lambda}}
 &&&{\qquad\dim\ 4}
\\    \\ \\
{\begin{bmatrix} 0&-1\\ 1&1
 \end{bmatrix}}\ar[uuu]
    &
{\begin{bmatrix} 1&\\
&1
\end{bmatrix}}\ar[uuul]&&{\qquad\dim\ 3}
 \\   \\ \\
  &{\begin{bmatrix}
1&\\ &0
 \end{bmatrix}}\ar[uuu] \ar[luuu]
 &&{\qquad\dim\ 2}
    \\
{\begin{bmatrix} 0&1\\
-1&0
 \end{bmatrix}}\ar[uuu]&
 &&{\qquad\dim\ 1}
      \\
&{\begin{bmatrix} 0&\\
&0
 \end{bmatrix}}
\ar[ul]\ar[uu]&&{\qquad\dim\ 0}
}
\]
\caption{\small\it The closure graphs for congruence
classes and congruence bundles of
$2\times 2$ matrices, in which $\lambda \in \mathbb C
\smallsetminus\{-1,1\}$ and each nonzero
$\lambda $ is determined up to
replacement by $\lambda^{-1} $.}
\label{kib4}
\end{figure}
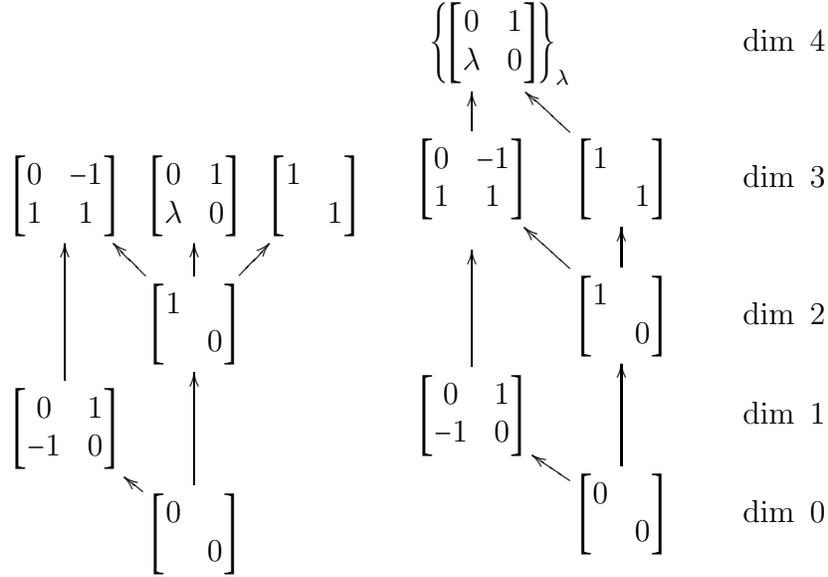

\begin{description}
  \item[The left graph] in Figure
      \ref{kib4} is the closure
      graph for congruence classes
      of ${2\times 2}$ matrices.
      The congruence classes are
      given by their $2\times 2$
      canonical matrices for
      congruence. The graph is
      infinite:
$\left[\begin{smallmatrix} 0&1\\
\lambda&0
 \end{smallmatrix}\right]$ represents the
infinite set of vertices indexed by
$\lambda \in \mathbb C
\smallsetminus\{-1,1\}$.

  \item[The right graph] is the
      closure graph for congruence
      bundles of ${2\times 2}$
      matrices. The vertex $\left\{
      \left[
\begin{smallmatrix} 0&1\\
\lambda &0
 \end{smallmatrix}\right]
 \right\}_{\lambda}$ represents the
bundle that consists of all
matrices whose congruence canonical
forms are
$\left[\begin{smallmatrix} 0&1\\
\lambda &0
 \end{smallmatrix}\right]$ with $\lambda
 \ne \pm 1$. The other vertices are
canonical matrices; their bundles
coincide with their congruence
classes. Note that $ \left[
\begin{smallmatrix} 0&1\\
-1 &0
 \end{smallmatrix}\right]$ and $\left[
\begin{smallmatrix} 0&1\\
\lambda &0
 \end{smallmatrix}\right]$  ($\lambda
 \ne \pm 1$) properly belong to
distinct bundles because these
matrices have distinct properties
 with respect to perturbations,
which is illustrated by the left
graph. Other arguments in favor of
Definition \ref{lqw} of congruence
bundles are given in \cite[Section
6] {f-k-s_cov_congr}.
\end{description}
The congruence classes and bundles with
vertices on the same horizontal level
have the same dimension, which is
indicated to the right.
\end{example}

\begin{example}\label{s5.2}
The closure graphs for congruence
classes and congruence bundles of
${3\times 3}$ matrices are presented in
Figure \ref{kib5}. They were
constructed by Futorny, Klimenko, and
Sergeichuk \cite{f-k-s_cov_congr}.
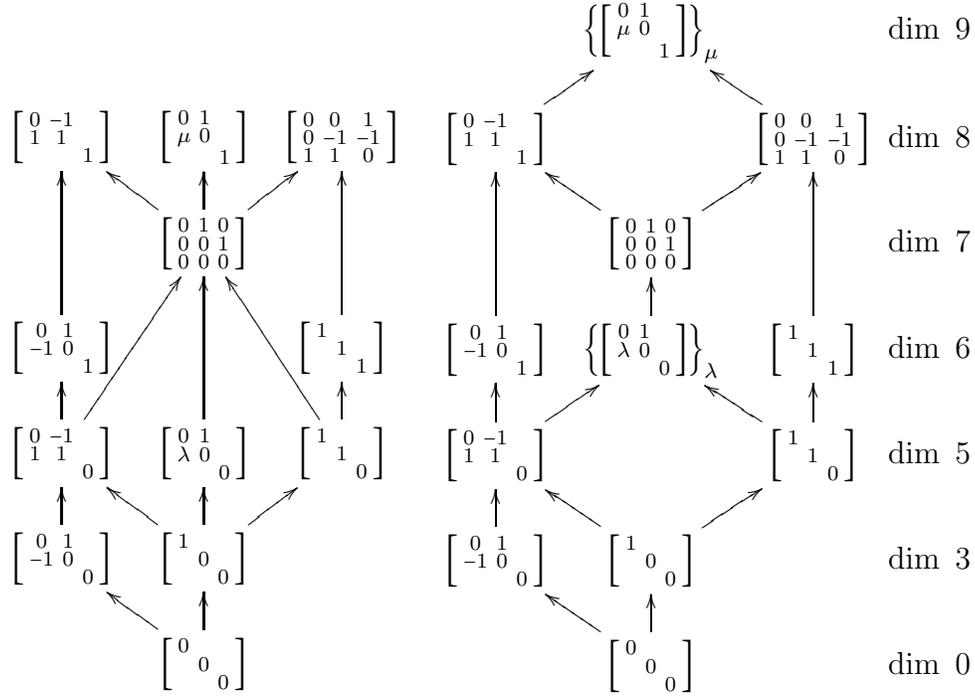
\begin{figure}[hbt]
\[
\xymatrix@C=1pt@R=14pt{
{\phantom{\left[\begin{smallmatrix} 0&-1&\\
1&1&\\&&A^A
\end{smallmatrix}\right]
}}&&&&\\
{\left[\begin{smallmatrix} 0&-1&\\
1&1&\\&&1
\end{smallmatrix}\right]}&&
{\left[\begin{smallmatrix}
0&1&\\ {\mu} &0&\\&&1
\end{smallmatrix}\right]} &&
{\left[\begin{smallmatrix}
0&0&1\\ 0&-1&-1\\1&1&0
\end{smallmatrix}\right]}
    \\
&& {\left[\begin{smallmatrix}
0&1&0\\ 0&0&1\\0&0&0
\end{smallmatrix}\right]} \ar[urr]\ar[ull] \ar[u]&&
    \\
{\left[\begin{smallmatrix}
0&1&\\ -1&0&\\&&1
\end{smallmatrix}\right]}\ar[uu]&&&&
{\left[\begin{smallmatrix}
1&&\\ &1&\\&&1
\end{smallmatrix}\right]}\ar[uu]\\
{\left[\begin{smallmatrix}
0&-1&\\ 1&1&\\&&0
\end{smallmatrix}\right]}
\ar[uurr]\ar[u]&
&{\left[\begin{smallmatrix}
0&1&\\ {\lambda}
&0&\\&&0
\end{smallmatrix}\right]}\ar[uu]
&&{\left[\begin{smallmatrix}
1&&\\ &1&\\&&0
\end{smallmatrix}\right]} \ar[uull]\ar[u]
         \\
{\left[\begin{smallmatrix}
0&1&\\ -1&0&\\&&0
\end{smallmatrix}\right]}
\ar[u] &&
{\left[\begin{smallmatrix}
1&&\\ &0&\\&&0
\end{smallmatrix}\right]}\ar[u]
\ar[ull]
\ar[urr]&&
 \\
&&{\left[\begin{smallmatrix}
0&&\\ &0&\\&&0
 \end{smallmatrix}\right]}\ar[ull]\ar[u]&&
      }
\quad\xymatrix@C=1pt@R=14pt{
&&
{\left\{\left[\begin{smallmatrix}
0&1&\\ {\mu} &0&\\&&1
\end{smallmatrix}\right]\right\}_{\mu}} &&
&{\dim\ 9}
\\
{\left[\begin{smallmatrix}
0&-1&\\ 1&1&\\&&1
\end{smallmatrix}\right]}\ar[urr]&&&&
{\left[\begin{smallmatrix}
0&0&1\\ 0&-1&-1\\1&1&0
\end{smallmatrix}\right]}\ar[ull]&
{\dim\ 8}
    \\
&& {\left[\begin{smallmatrix}
0&1&0\\ 0&0&1\\0&0&0
\end{smallmatrix}\right]} \ar[urr]\ar[ull]&& &
{\dim\ 7}
    \\
{\left[\begin{smallmatrix}
0&1&\\ -1&0&\\&&1
\end{smallmatrix}\right]}\ar[uu]&&
{\left\{\left[\begin{smallmatrix}
0&1&\\ {\lambda}
&0&\\&&0
\end{smallmatrix}\right]\right\}_{\lambda}}\ar[u]&&
{\left[\begin{smallmatrix}
1&&\\ &1&\\&&1
\end{smallmatrix}\right]}\ar[uu]&
{\dim\ 6}\\
{\left[\begin{smallmatrix}
0&-1&\\ 1&1&\\&&0
\end{smallmatrix}\right]}\ar[urr]
\ar[u]&
&
&&{\left[\begin{smallmatrix}
1&&\\ &1&\\&&0
\end{smallmatrix}\right]} \ar[ull]\ar[u] &
{\dim\ 5}
         \\
{\left[\begin{smallmatrix}
0&1&\\ -1&0&\\&&0
\end{smallmatrix}\right]}
\ar[u] &&
{\left[\begin{smallmatrix}
1&&\\ &0&\\&&0
\end{smallmatrix}\right]}
\ar[ull]
\ar[urr]&&
 &
{\dim\ 3}\\
&&{\left[\begin{smallmatrix}
0&&\\ &0&\\&&0
 \end{smallmatrix}\right]}\ar[ull]\ar[u]&&
 &
{\dim\ 0}
      }
\]
\caption{\small\it The closure graphs for congruence
classes and congruence bundles of
${3\times 3}$ matrices, in which
$\lambda,\mu\ne\pm 1$, and nonzero $\lambda
$ and $\mu $ are determined up to
replacements by $\lambda^{-1} $ and
$\mu^{-1}$.} \label{kib5}
\end{figure}
\begin{description}
  \item[The left graph] in Figure
      \ref{kib5} is the closure
      graph for congruence classes
      of ${3\times 3}$ matrices.
      The congruence classes are
      given by their $3\times 3$
      canonical matrices for
      congruence. The graph is
      infinite:
$\left[\begin{smallmatrix} 0&1&\\
{\lambda} &0&\\&&0
\end{smallmatrix}\right]$ and
$\left[\begin{smallmatrix} 0&1&\\
{\mu} &0&\\&&1
\end{smallmatrix}\right]$ represent the
infinite sets of vertices indexed
by $\lambda,\mu\ne\pm 1$.

  \item[The right graph] is the
      closure graph for congruence
      bundles of ${3\times 3}$
      matrices. The vertices
$\left\{\left[\begin{smallmatrix} 0&1&\\
{\lambda} &0&\\&&0
\end{smallmatrix}\right]\right\}_{\lambda}$
 and
$\left\{\left[\begin{smallmatrix}
0&1&\\ {\mu} &0&\\&&1
\end{smallmatrix}\right]\right\}_{\mu}$
represent the bundles that consist
of all matrices whose congruence
canonical forms are
$\left[\begin{smallmatrix} 0&1&\\
{\lambda} &0&\\&&0
\end{smallmatrix}\right]$
$(\lambda\ne \pm 1$) or
$\left[\begin{smallmatrix} 0&1&\\
{\mu} &0&\\&&1
\end{smallmatrix}\right]$
($\mu \ne \pm 1$), respectively.
The other vertices are canonical
matrices; their bundles coincide
with their congruence classes.
\end{description}

\end{example}

\begin{remark}
Let $M$ be a $2\times 2$ or $3\times 3$
canonical matrix for congruence.

\begin{itemize}
\item Let $N$ be another canonical
    matrix for congruence of the
    same size. Each neighborhood of
$M$ contains a matrix from the
congruence class (respectively,
bundle) of $N$ if and only if there
is a directed path from $M$ to $N$
in the left (resp. right) graph in
Figures \ref{kib4} or \ref{kib5}.
Note that there always exists the
``lazy'' path of length $0$ from
$M$ to $M$ if $M=N$.

  \item The closure of the
      congruence class (resp.
      bundle) of $M$
is equal to the union of the
      congruence classes (resp.
      bundles) of all canonical
      matrices $N$ such that there
      is a directed path from $N$
      to $M$.
\end{itemize}
\end{remark}

\section{Perturbations of matrices determined up
*congruence}\label{mmj}

Dmytryshyn, Futorny, and Sergeichuk
\cite{def-sesq} constructed miniversal
deformations of the following
*congruence canonical matrices given by
Horn and Sergeichuk
\cite{hor-ser_transp,hor-ser_can}:

\begin{quote}
\emph{Every square complex matrix is
*congruent to a direct sum, determined
uniquely up to permutation of summands,
of matrices of the form
\begin{equation}\label{tabll}
\begin{bmatrix}0&I_m\\
J_m(\lambda) &0
\end{bmatrix},
\qquad \mu \begin{bmatrix}
0 &  &  & 1\\
&  &
\udots & i\\
& 1 &
\udots & \\
1 & i &  & 0
\end{bmatrix},\qquad
J_k(0),
\end{equation}
in which $\lambda,\mu \in\mathbb C$,
$|\lambda |>1$, and $|\mu |=1$. } (The
condition $|\lambda |>1$ can be
replaced by $0<|\lambda |<1$.)
\end{quote}

The miniversal deformations
\cite[Theorem 2.2]{def-sesq} of
*congruence canonical matrices are
rather cumbersome, so we give them only
for $2\times 2$ and $3\times 3$
matrices.

\begin{theorem}
\label{le_de} Let $A$ be any $2\times
2$ or $3\times 3$ matrix. Then all
matrices $A+X$ that are sufficiently
close to $A$ can be simultaneously
reduced by some transformation
\[
{\cal
S}(X)^* (A+X) {\cal
S}(X),\quad
\begin{matrix}
\text{${\cal S}(X)$
is nonsingular and conti-}\\
\text{nuous on a neighborhood of zero,}
\end{matrix}
\]
to one of the following forms.
\smallskip

$\bullet$ If $A$ is $2\times
      2$:
\begin{align*}
&\begin{bmatrix} 0&0\\
0&0
 \end{bmatrix}+
\begin{bmatrix}
 *&*\\ *&*
 \end{bmatrix},
                            &&
\begin{bmatrix} \mu_1 &0\\
0&0
 \end{bmatrix}+
\begin{bmatrix}
\varepsilon_1&0\\ *&*
 \end{bmatrix},
                         &&
\begin{bmatrix} \mu_1 &0\\
0&\mu_2
 \end{bmatrix}+
\begin{bmatrix}
\varepsilon_1&0\\ \delta_{21}&
\varepsilon_2
 \end{bmatrix},
                         \\&
\begin{bmatrix} 0&\mu_1 \\
\mu_1  &i\mu_1
 \end{bmatrix}+
\begin{bmatrix}
 *&0\\ 0&0
 \end{bmatrix},
                         &&
\begin{bmatrix} 0&1\\
\lambda  &0
 \end{bmatrix}+
\begin{bmatrix}
 0&0\\ *&0
 \end{bmatrix}.
\end{align*}


$\bullet$ If $A$ is $3\times
      3$:
\begin{longtable}{ll}
$\begin{bmatrix} 0\\
&0\\&&0
 \end{bmatrix}+
\begin{bmatrix}
 *&*&*\\ *&*&*\\ *&*&*
 \end{bmatrix},$
                         &
$\begin{bmatrix} \mu_1\\
&0\\&&0
 \end{bmatrix}+
\begin{bmatrix}
\varepsilon_1&0&0\\ *&*&*\\ *&*&*
 \end{bmatrix},$\vspace{3pt}
                            \\
$\begin{bmatrix} \mu_1\\
&\mu_2\\&&0
 \end{bmatrix}+
\begin{bmatrix}
\varepsilon_1&0&0\\ \delta_{21}&
\varepsilon_2&0\\ *&*&*
 \end{bmatrix},$
                           &
$\begin{bmatrix} \mu_1\\
&\mu_2\\&&\mu_3
 \end{bmatrix}+
\begin{bmatrix}
\varepsilon_1&0&0\\ \delta_{21}&
\varepsilon_2&0\\
\delta_{31}&\delta_{32}& \varepsilon_3
 \end{bmatrix},$\vspace{3pt}
                               \\
$\begin{bmatrix} 0&\mu_1 \\
\mu_1  &i\mu_1\\&&\mu _2
 \end{bmatrix}+
\begin{bmatrix}
 *&0&0\\ 0&0&0\\
\delta _{21}&0&\varepsilon_2
 \end{bmatrix},$
                            &
$\begin{bmatrix} 0&\mu_1 \\
\mu_1  &i\mu_1\\&&0
 \end{bmatrix}+
\begin{bmatrix}
 *&0&0\\ 0&0&0\\
*&*&*
 \end{bmatrix},$\vspace{3pt}
                          \\
$\begin{bmatrix} 0&1 \\
\lambda &0 \\&&\mu _1
 \end{bmatrix}+
\begin{bmatrix}
 0&0&0\\ *&0&0\\
0&0&\varepsilon_1
 \end{bmatrix},$
                         &
$\begin{bmatrix} 0&1 \\
\lambda &0 \\&&0
 \end{bmatrix}+
\begin{bmatrix}
 0&0&0\\ *&0&0\\
*&*&*
 \end{bmatrix}\ (\lambda\ne 0),$\vspace{3pt}
                            \\
$\begin{bmatrix} 0&1\\0&0\\ &&0
 \end{bmatrix}+
\begin{bmatrix} 0&0&0\\
*&0&*\\ *&0&*
 \end{bmatrix},$
                        &
$\begin{bmatrix} 0&1&0\\
0&0&1\\0&0&0
 \end{bmatrix}+
\begin{bmatrix} 0&0&0\\
0&0&0\\ *&0&*
 \end{bmatrix},$\vspace{3pt}
                         \\
$\begin{bmatrix} 0&0&\mu_1 \\
0&\mu_1  &i\mu_1\\ \mu_1  &i\mu_1&0\\
 \end{bmatrix}+
\begin{bmatrix}
 0&0&0\\0&\varepsilon_1&0\\ 0&0&0
\end{bmatrix},\quad$
\end{longtable}
Each of these matrices has the form
$A_{\rm can}+{\cal D}$, in which
$A_{\rm can}$ is a canonical matrix for
*congruence, the stars in ${\cal D}$
are complex numbers, $|\lambda| <1$,
$|\mu_1|=|\mu_2|=|\mu_3|=1$, and
\begin{align*}\label{omr}
&\varepsilon_l\in\mathbb R\text{ if }
\mu_l\notin\mathbb R
                    &&
\delta_{lr}=0\text{ if } \mu_l\ne\pm\mu_r
 \\
&\varepsilon_l\in i\mathbb R\text{ if }
\mu_l\in\mathbb R
                    &&
\delta_{lr}\in\mathbb C\text{ if } \mu_l=\pm\mu_r
\end{align*}
$($Clearly, ${\cal D}$ tends to zero as
$X$ tends to zero.$)$ For each $A_{\rm
can}+{\cal D}$, twice the number of its
stars plus the number of its entries
$\varepsilon_{l},\delta_{lr}$ is equal
to the codimension over $\mathbb R$ of
the *congruence class of $A_{\rm can}$.
\end{theorem}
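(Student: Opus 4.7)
The plan is to adapt the constructive approach of the sketch of Theorem~\ref{teo2} to the *congruence setting. Since only $2\times 2$ and $3\times 3$ matrices are involved, a direct case-by-case reduction is feasible: for each canonical matrix $A_{\rm can}$ listed in the statement I would take a generic perturbation $A_{\rm can}+X$ and exhibit a sequence of near-identity elementary *congruence transformations reducing it to the form $A_{\rm can}+\mathcal{D}$. First I would establish an analogue of Lemma~\ref{nbf}: the \emph{elementary *congruence transformations} are (i) multiplying column $i$ by $a\ne 0$ and then dividing row $i$ by $\bar a$; (ii) adding column $i$ multiplied by $b\in\mathbb C$ to column $j$ and then adding row $i$ multiplied by $\bar b$ to row $j$; (iii) interchanging columns $i,j$ and then rows $i,j$. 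Factoring any nonsingular $S$ as $E_1\cdots E_t$ shows $S^*AS=E_t^*\cdots E_1^*AE_1\cdots E_t$, so every *congruence is a composition of such elementary steps.

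Next I would block-diagonalize, following Case~2 of the proof of Theorem~\ref{teo2}. The canonical blocks in \eqref{tabll} fall into three families indexed by a ``spectral invariant'': for nonsingular Horn--Sergeichuk blocks the pair $\{\lambda,\bar\lambda^{-1}\}$ of eigenvalues of the cosquare $A^{-*}A$; for the middle (odd-sized, unimodular $\mu$) blocks the unimodular number $\mu$; and the eigenvalue $0$ for $J_k(0)$. Given $A_{\rm can}+X$, I would successively kill off-diagonal blocks between summands having different invariants by near-identity *congruences $I+Y$; each step amounts to solving a Sylvester-type equation $A_iY+Y^*A_j=(\text{small})$ whose solvability for small $Y$ follows from the disjointness of the two spectra. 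This is the same mechanism underlying Roiter's reduction of the *congruence of $A$ to the equivalence of the pair $(A,A^*)$ invoked for congruence bundles in Section~\ref{ssss}.

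Within each resulting diagonal block I would perform column/row clearing as in the proof of Theorem~\ref{teo2}, always balancing column and row operations according to (i)--(iii). The shape of $\mathcal D$ is forced by the stabilizer of $A_{\rm can}$ under *congruence: the constraints $\varepsilon_l\in\mathbb R$ (resp.\ $i\mathbb R$) reflect that the diagonal *congruence $\operatorname{diag}(e^{i\theta_1},\dots)$ preserves a non-real (resp.\ real) $\mu_l$ only up to a real (resp.\ purely imaginary) shift, while the condition $\delta_{lr}=0$ when $\mu_l\ne\pm\mu_r$ comes from the same Sylvester-type solvability applied to the $2\times 2$ block $\operatorname{diag}(\mu_l,\mu_r)$. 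Finally I would verify the codimension count: the tangent space to the *congruence orbit is the $\mathbb R$-subspace $T(A)=\{X^*A+AX\mid X\in\mathbb C^{n\times n}\}$, and $\dim_{\mathbb R}\mathbb C^{n\times n}-\dim_{\mathbb R}T(A_{\rm can})$ must match twice the number of stars in $\mathcal D$ plus the real dimension contributed by the constrained parameters $\varepsilon_l,\delta_{lr}$, which is a short direct computation for each listed $A_{\rm can}$.

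The main obstacle is that the *congruence action of $\operatorname{GL}_n(\mathbb C)$ is only $\mathbb R$-linear---$(\alpha I)^*A(\alpha I)=|\alpha|^2A$ is not holomorphic in $\alpha$---so the linear-algebraic steps (tangent-space computations, Sylvester equations, parameter counting) must be handled over $\mathbb R$ throughout. In particular, the block-diagonalization requires careful bookkeeping of the pairing $\lambda\leftrightarrow\bar\lambda^{-1}$ on cosquare eigenvalues (which is why the statement restricts to $|\lambda|>1$, equivalently $0<|\lambda|<1$), and the counting of free parameters inside the unimodular blocks is precisely what forces the $\mathbb R$-constraints on the $\varepsilon_l$ and the $\pm$-pairing condition on the $\delta_{lr}$.
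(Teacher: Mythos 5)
The paper does not prove Theorem~\ref{le_de}: the paragraph preceding it only says that the miniversal deformations are taken from \cite[Theorem~2.2]{def-sesq} and specialized to sizes $2$ and $3$, so there is no in-paper proof to compare your proposal against. The cited reference establishes the result by the Arnold deformation-theoretic route: compute the real tangent space $T(A_{\rm can})=\{X^*A_{\rm can}+A_{\rm can}X\mid X\in\mathbb C^{n\times n}\}$, exhibit a real linear complement $\mathcal D$ of minimal dimension, and apply a transversality/inverse-function-theorem argument over $\mathbb R$. Your plan instead transplants the elementary-reduction sketch given for Theorem~\ref{teo2}, which is a legitimate alternative and more explicit, but the proposal never actually carries out the case work for the eleven $3\times 3$ canonical matrices; as written it is a plan, not a proof.

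The one step in your plan that is stated incorrectly is the block-diagonalization. Applying $(I+Y)^*(A+X)(I+Y)$ with $Y$ supported on the $(i,j)$ and $(j,i)$ blocks touches both off-diagonal positions at once and produces the \emph{coupled} system $A_iY_{ij}+Y_{ji}^*A_j=-X_{ij}$ together with $A_jY_{ji}+Y_{ij}^*A_i=-X_{ji}$ in the two unknowns $Y_{ij},Y_{ji}$, not a single equation $A_iY+Y^*A_j=(\text{small})$. The second equation is \emph{not} the conjugate transpose of the first, so this is a genuine $\mathbb R$-linear system whose solvability is a rank condition that must be checked. It does hold when the relevant spectral invariants (cosquare eigenvalue pairs $\{\lambda,\bar\lambda^{-1}\}$, the unimodular $\mu$'s up to sign, the nilpotent sizes) are distinct, and its degeneracy locus is exactly what produces the conditions $\mu_l=\pm\mu_r$ and the $\varepsilon_l\in\mathbb R$ or $\varepsilon_l\in i\mathbb R$ constraints in the theorem. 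But one cannot quote ordinary Sylvester solvability for it; the argument has to be redone over $\mathbb R$ for the coupled map, in exactly the same spirit in which you correctly insist the tangent-space dimension count must be done over $\mathbb R$.
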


The codimension of the *congruence
class of a *congruence canonical matrix
$A\in{\mathbb C}^{n\times n}$ was
calculated by De Ter\'an and Dopico
\cite{t_d_*} and independently by
Dmytryshyn, Futorny, and Sergeichuk
\cite{def-sesq}; it is defined as
follows. For each $A\in{\mathbb
C}^{n\times n}$ and a small matrix
$X\in{\mathbb C}^{n\times n}$,
\[
(I+X)^*A(I+X)
=A+\underbrace{X^* A+ AX}
_{\text{small}}
+\underbrace{X^* AX}
_{\text{very small}}
\]
and so the *congruence class of $A$ in
a small neighborhood of $A$ can be
obtained  by a very small deformation
of the real affine matrix space $\{A+
X^* A+ AX\,|\,X\in{\mathbb C}^{n\times
n}\}$. (By the local Lipschitz property
\cite{rodm}, if $A$ and $B$ are close
to each other and $B=S^*AS$ with a
nonsingular $S$, then $S$ can be taken
near $I_n$). The real vector space
\[
T(A):=\{X^*A+AX\,|\,X\in{\mathbb
C}^{n\times n}\}
\]
is the tangent space to the *congruence
class of $A$ at the point $A$.  The
numbers
\[
\dim_{\mathbb R} T(A),\qquad \codim_{\mathbb R} T(A):=2n^2-\dim_{\mathbb R} T(A)
\]
are called the \emph{dimension} and,
respectively, \emph{codimension over
${\mathbb R}$} of the *congruence class
of $A$.

\begin{example}\label{thu}
The closure graph for *congruence
classes of ${2\times 2}$ matrices is
presented in Figure \ref{kib6}; it was
constructed by Futorny, Klimenko, and
Sergeichuk \cite{f-k-s_cov_*congr}.
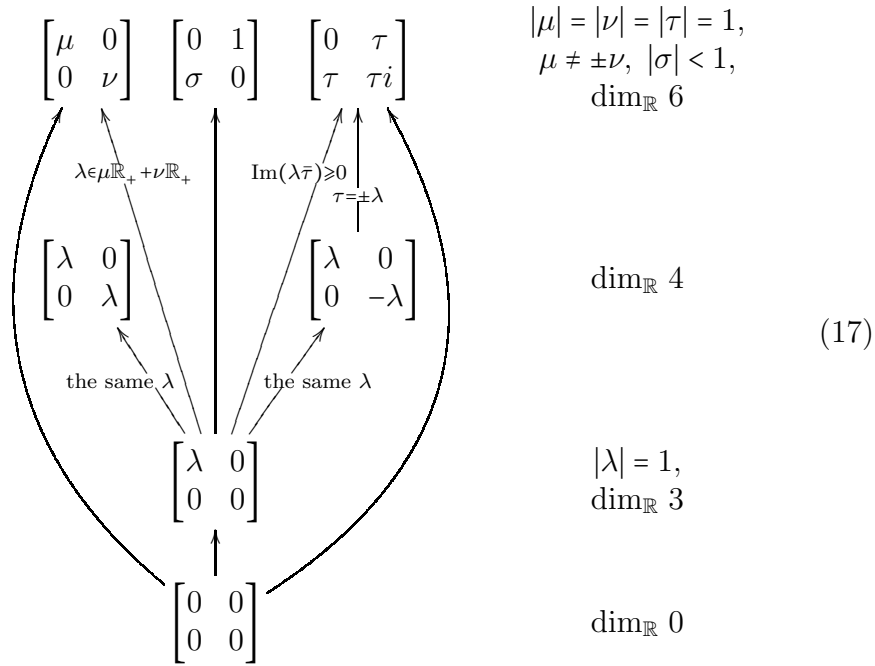
\begin{figure}[hbt]
\begin{equation}\label{g1}
\begin{split}\qquad\qquad
\xymatrix@R=17pt@C=8pt{
{\begin{bmatrix} \mu &0\\ 0&\nu
 \end{bmatrix}}
           &
{\begin{bmatrix} 0&1\\ \sigma  &0
\end{bmatrix}}
           &
{\begin{bmatrix} 0&\tau \\
\tau &\tau i \end{bmatrix}}
 &&& {\begin{matrix}
 |\mu|=|\nu|=|\tau|=1,\\
        \mu\ne\pm\nu,\
|\sigma
      |<1,\\ \dim_{\mathbb R}\, 6
      \end{matrix}
}
     \\ \\
{\begin{bmatrix} \lambda &0\\ 0&\lambda
 \end{bmatrix}}
&&
{\ \begin{bmatrix} \lambda &0\\ 0&-\lambda
 \end{bmatrix}}
\ar[uu]|-(.3){\tau =\pm\lambda}
   &&&
{\dim_{\mathbb R}\, 4}
     \\ \\
&{\begin{bmatrix} \lambda &0\\
0&0
 \end{bmatrix}}
   \ar[uul]|-{\text{\rm the same }\lambda\quad\quad}
   \ar[uur]|-{\quad\quad\text{\rm the same }\lambda}
\ar[uuuu]\ar[uuuul]|-(.8){\ \ \:\lambda \in
\mu\mathbb R_{_+} +\nu\mathbb
R_{_+}}
\ar[uuuur]|-(.8){{\rm Im}
(\lambda\bar\tau)\ge 0\ \quad}& & &&
{\begin{matrix}
   |\lambda |=1,\\
\dim_{\mathbb R}\, 3
 \end{matrix}
}
     \\
&{\begin{bmatrix} 0&0\\
0&0
 \end{bmatrix}}
\ar[u]\ar@/^4.2pc/[uuuuul]
\ar@/_5pc/[uuuuur]
& & && {\dim_{\mathbb R}\, 0}
} \end{split}
\end{equation}
\caption{\small\it The closure graph for *congruence
classes of ${2\times 2}$ matrices,
in which
$\mathbb R_+$ denotes the set of
nonnegative real numbers,
$\im (c)$ denotes the imaginary part
of $c\in\mathbb C$, and $\lambda ,\mu ,\nu ,\sigma
,\tau \in\mathbb C$.}
\label{kib6}
\end{figure}
Each *congruence class is given by its
canonical matrix, which is a direct sum
of blocks of the form \eqref{tabll}.
The graph is infinite: each vertex
except for $\left[\begin{smallmatrix}
0&0\\0&0
\end{smallmatrix}\right]$ represents
an infinite set of vertices indexed by
the parameters of the corresponding
canonical matrix. The *congruence
classes of canonical matrices that are
located at the same horizontal level in
\eqref{g1} have the same dimension over
$\mathbb R$, which is indicated to the
right. The arrow
$\left[\begin{smallmatrix} \lambda &0\\
0&0
\end{smallmatrix}\right]\to
\left[\begin{smallmatrix} \mu &0\\
0&\nu
\end{smallmatrix}\right]$ exists
if and only if $ \lambda
 =\mu a +\nu b$ for some nonnegative
$a,b\in\mathbb R$. The arrow
$\left[\begin{smallmatrix} \lambda &0\\
0&0
\end{smallmatrix}\right]\to
\left[\begin{smallmatrix} 0&\tau \\
\tau &i\tau
\end{smallmatrix}\right]$ exists
if and only if the imaginary part of
$\lambda\bar\tau$ is nonnegative. The
arrow
$\left[\begin{smallmatrix} \lambda &0\\
0&-\lambda
\end{smallmatrix}\right]\to
\left[\begin{smallmatrix} 0&\tau \\
\tau &i\tau
\end{smallmatrix}\right]$ exists
if and only if $\tau =\pm\lambda$. The
arrows
$\left[\begin{smallmatrix} \lambda &0\\
0&0
\end{smallmatrix}\right]\to
\left[\begin{smallmatrix} \lambda &0 \\
0&\pm\lambda
\end{smallmatrix}\right]$ exist
if and only if the value of $\lambda$
is the same in both matrices.  The
other arrows exist for all values of
parameters of their matrices.
\end{example}

\begin{remark}\rm
Let $M$ be a $2\times 2$ canonical
matrix for *congruence.

\begin{itemize}
\item Let $N$ be another $2\times
    2$ canonical matrix for
    *congruence. Each neighborhood
    of $M$ contains a matrix that
    is *congruent to
$N$ if and only if there is a
directed path
    from $M$ to $N$ in \eqref{g1}
    (if $M=N$, then there always
    exists the ``lazy'' path of
    length $0$ from $M$ to $N$).

  \item The closure of the
      *congruence class of $M$ is
      equal to the union of the
      *congruence classes of all
      canonical matrices $N$ such
      that there is a directed path
      from $N$ to $M$.
\end{itemize}
\end{remark}

\end{document}